\definecolor{roxo}{HTML}{8613B0}
\newtheorem{theorem}{Theorem}[section]
\newtheorem{conjecture}[theorem]{Conjecture}
\newtheorem{lemma}[theorem]{Lemma}
\newtheorem{corollary}[theorem]{Corollary}
\newtheorem{claim}[theorem]{Claim}
\newtheorem{definition}[theorem]{Definition}
\newcommand{\aset}[1]{\left\{#1\right\}}
\def\start{\mathop{\text{\rm start}}\nolimits}
\def\fim{\mathop{\text{\rm end}}\nolimits}
\newcommand\D{\mathcal{D}}
\title[Decomposing highly connected graphs into paths of length five]
{Decomposing highly connected graphs \\ into paths of length five}
\author [ ]{F. Botler, ~ G. O. Mota, ~  M. T. I. Oshiro, ~ Y. Wakabayashi}
\dedicatory{\small {\rm Instituto de Matem\'atica e Estat\'{\i}stica \\ Universidade de S\~ao 
Paulo, Brazil }} 
\thanks{This research has been partially supported by CNPq
  Projects (Proc. 477203/2012-4 and {456792/2014-7}), Fapesp Project
  (Proc. 2013/03447-6) and MaCLinC Project of Numec/USP, Brazil.
  F. Botler is supported by Fapesp (Proc. 2014/01460-8 and
  2011/08033-0), G. O.  Mota is supported by Fapesp
  (Proc. 2013/11431-2 and 2013/20733-2), M. T. I. Oshiro is supported
  by Capes, and Y. Wakabayashi is partially supported by CNPq Grant
  (Proc. 303987/2010-3).\\
 Email:{\texttt{{\{fbotler|mota|oshiro|yw\}@ime.usp.br}}}}
\date{\today, \currenttime}
\begin{document}

\begin{abstract}
  Barát and Thomassen (2006) posed the following decomposition
  conjecture: for each tree \(T\), there exists a natural number
  \(k_T\) such that, if \(G\) is a \(k_T\)-edge-connected graph and
  \(|E(G)|\) is divisible by \(|E(T)|\), then \(G\) admits a
  decomposition into copies of \(T\).
  In a series of papers, Thomassen verified this conjecture for stars,
  some bistars, paths of length \(3\), and paths whose length is a
  power of $2$.  We verify this conjecture for paths of
  length \(5\).
\end{abstract}

\maketitle
\onehalfspacing

\section{Introduction}

A \emph{decomposition} \(\D\) of a graph \(G\) is a set
\(\{H_1,\ldots,H_k\}\) of pairwise edge-disjoint subgraphs of \(G\)
whose union is \(G\).  If each subgraph \(H_i\), \(1\leq i\leq k\), is
isomorphic to a given graph \(H\), then we say that \(\D\) is an
\emph{\(H\)-decomposition} of \(G\).  

A well-known result of Kotzig (see~\cite{Bondy90,Kotzig57}) states
that a connected graph \(G\) admits a decomposition into paths of
length~\(2\) if and only if \(G\) has an even number of edges.  Dor
and Tarsi~\cite{DorTarsi97} proved that the problem of deciding
whether a graph has an \(H\)-decomposition is NP-complete whenever
\(H\) is a connected graph with at least \(3\)~edges.  It is then
natural to consider special classes of graphs~\(H\), and look for
sufficient conditions for a graph~\(G\) to admit an \(H\)-decomposition.
One class of graphs that has been studied from this point of view is
that of paths, in special when the input graph $G$ is regular. 
A pioneering work on this topic dates back to 1957, and
although some others have followed, a number of questions 
remain open~\cite{BoFo83,FaGeKo10,HeLiYu99,Kotzig57}.
For the special case in which $H$ is a tree, Barát and
Thomassen~\cite{BaTh06} proposed the following conjecture.

\begin{conjecture}\label{conj:dec}
For each tree $T$, there exists a natural number $k_T$ such that, if
  $G$ is a \hbox{$k_T$-edge-connected} graph and $|E(G)|$ is divisible by
  $|E(T)|$, then \(G\) admits a \hbox{\(T\)-decomposition.}

\end{conjecture}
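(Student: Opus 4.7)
My plan would be to translate the decomposition problem into an orientation problem and then appeal to Thomassen's modular orientation theorems, which state that for every positive integer $k$ there is a function $f(k)$ such that every $f(k)$-edge-connected graph admits an orientation whose in-degrees are prescribed modulo $k$. For a tree $T$ with $t = |E(T)|$ edges, I would seek an edge-connectivity threshold $k_T$ that guarantees an orientation of $G$ whose in-degree function is, at each vertex, congruent to a convenient target modulo $t$; once such an orientation exists, the goal becomes to extract a $T$-decomposition from it by using the oriented degrees as a local budget.

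The first structural step would be to pass from an arbitrary tree to a more tractable subfamily. One natural candidate is the class of caterpillars, since any tree can be obtained from a caterpillar by subdividing edges, and subdivision interacts cleanly with high edge-connectivity. A complementary reduction is to the bipartite setting: given a graph $G$ of very high edge-connectivity, a randomly chosen bipartite spanning subgraph $G' \subseteq G$ retains enough connectivity by concentration, and one can further condition on $|E(G')|$ remaining divisible by $t$. In the bipartite world, decompositions become amenable to matching-based techniques and to Eulerian-type arguments, as illustrated by Thomassen's earlier verifications for stars and short paths; carrying out this bipartite reduction would reduce Conjecture \ref{conj:dec} to a cleaner, more algebraic statement.

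Given the orientation and the structural reduction, I would construct the $T$-decomposition iteratively: embed one copy of $T$ along oriented edges, remove it, and proceed on the residual graph. The main obstacle — and the reason the conjecture has resisted a general resolution — is that removing a copy of $T$ can destroy the edge-connectivity of $G$ arbitrarily, so naive induction on $|E(G)|$ collapses. To circumvent this I would aim for one of two strategies: (a) maintain a strong invariant asserting that any partial $T$-packing leaving a residue of controlled structure extends to a full decomposition; or (b) work globally with a fractional or probabilistic object, for example a random partition of $E(G)$ into near-copies of $T$, and round it to an integral decomposition, using the extreme connectivity as slack for the rounding. Handling trees with genuine branching is the real difficulty: interior vertices of $T$ impose simultaneous local degree constraints that a single modular orientation cannot encode, so a new ingredient — perhaps a recursive decomposition of $T$ itself into caterpillar-like pieces whose $G$-decompositions can be glued — appears necessary, and pinning down this ingredient is where I expect the proof to stand or fall.
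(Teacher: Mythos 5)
The statement you are addressing is a \emph{conjecture}: the paper does not prove it in general, and neither do you. What the paper actually establishes is only the special case $T=P_5$, via the equivalence with the bipartite version (Theorem~\ref{theorem:conj-equivalence}) and a concrete bound $k'_{P_5}\le 48$; the general conjecture remained open at the time of writing. Your text is a research programme rather than a proof: the decisive steps are presented as alternatives you ``would aim for'' --- maintaining an unspecified invariant under deletion of copies of $T$, or rounding a fractional/random packing --- and you yourself concede that the ingredient needed to handle branching vertices is missing. A proposal that ends with ``pinning down this ingredient is where I expect the proof to stand or fall'' has not proved the statement.

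Beyond the overall incompleteness, two of your concrete reductions are flawed. First, the passage to bipartite graphs cannot be done by choosing a bipartite spanning subgraph $G'\subseteq G$ (random or otherwise), because a decomposition must cover \emph{every} edge of $G$; the edges of $E(G)\setminus E(G')$ do not disappear, and your sketch says nothing about them. The genuine reduction (Barát--Gerbner, Thomassen) is a deterministic equivalence obtained by a careful construction, not by passing to a subgraph. Second, the claim that ``any tree can be obtained from a caterpillar by subdividing edges'' is false: suppressing degree-$2$ vertices of a tree need not yield a caterpillar (e.g.\ the complete binary tree of depth $3$ has no degree-$2$ vertices and is not a caterpillar), so this reduction does not cover all trees. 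Finally, while the modular-orientation idea is indeed the correct starting point --- the paper uses Theorem~\ref{thm:Thomassen12} in exactly this role for $k=5$ --- the paper's real work lies in the fractional factorization, the canonical trail decomposition, and the disentangling argument of Section~\ref{sec:main}, none of which your proposal supplies or replaces.
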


  Barát and Thomassen~\cite{BaTh06} proved that
  Conjecture~\ref{conj:dec} in the special case $T$ is the claw
  \(K_{1,3}\) is equivalent to Tutte's weak \(3\)-flow
  conjecture, posed by Jaeger~\cite{Jaeger88}. They also 
  observed that this conjecture is false if, instead of a tree, we
  consider a graph that contains a cycle.

 Since 2008 many results on this conjecture have been found by
  Thomassen~\cite{Th08b,Th08a,Th12,Th13a,Th13b}. He has verified that
  this conjecture holds for paths of length~\(3\), stars, a family of
  bistars, and paths whose length is a power of~$2$. Recently, we
  learned that Merker~\cite{Me15+} proved that
  Conjecture~\ref{conj:dec} holds for trees with diameter at most~$4$
  and also for some trees with diameter at most~$5$, including $P_5$,
  the path of length five.

In this paper we will focus on the following version of Conjecture~\ref{conj:dec} for bipartite graphs.

\begin{conjecture}\label{conj:dec-bip}
For each tree $T$, there exists a natural number
$k'_T$ such that, if $G$ is a $k'_T$-edge-connected bipartite graph
and $|E(G)|$ is divisible by $|E(T)|$, then \(G\) admits a \(T\)-decomposition.
\end{conjecture}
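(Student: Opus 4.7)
My plan is to mirror the framework that Thomassen developed for paths whose length is a power of two, but to exploit the bipartite hypothesis to streamline the crucial orientation step. I would proceed by induction on $|E(T)|$. The base case $|E(T)|=1$ is trivial, and the stars $K_{1,m}$ are already handled by Thomassen's theorem. For the inductive step, fix a tree $T$ with $m=|E(T)|$ edges, write $T=T'\cup\{e\}$ for an appropriately chosen leaf-edge $e$ attached at a vertex $u$ of $T'$, and assume the conjecture holds for every tree on fewer edges, in particular for $T'$ with some constant $k'_{T'}$.

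The first step is a structural reduction on the host graph. Given a sufficiently edge-connected bipartite graph $G=(A\cup B,E)$ with $m\mid |E(G)|$, I would invoke the strongest available orientation theorems --- Nash--Williams's theorem together with Thomassen's mod-$m$ refinements, which typically demand somewhat lower edge-connectivity in the bipartite setting because of the absence of odd cycles --- to produce an orientation $D$ of $G$ in which every vertex has in-degree congruent to a prescribed value modulo $m$. This prescription is chosen to encode the ``role multiplicity'' of each vertex in a putative $T$-decomposition, and the fact that every arc of $D$ crosses the bipartition simplifies the local accounting considerably.

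The second step is to convert this orientation into an actual $T$-decomposition. Root $T$ at $u$ and index its vertices by distance from the root; each copy of $T$ will be constructed greedily, starting at a chosen vertex of $G$ and branching outwards, selecting oriented edges consistent with the parity pattern of $T$ relative to the bipartition. The mod-$m$ in-degree prescription should force exactly the right number of edges of each ``type'' to be present at each vertex, so that the greedy procedure balances locally. Once a partial copy of $T'$ has been assembled, the inductive hypothesis --- applied to an auxiliary bipartite graph capturing the edges reserved for the leaf positions --- would supply the final edge $e$.

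The main obstacle is precisely this second step: unlike the path case, where $T$ has a single linear backbone that guides the greedy construction, a general tree branches, and the combinatorial constraints at each internal vertex must be simultaneously satisfiable. Establishing the required refined orientation theorem uniformly over all trees, with a tree-dependent but finite $k'_T$, is where the true difficulty of Conjecture~\ref{conj:dec-bip} lies. Any serious attack must quantify, in terms of the degree sequence and diameter of $T$, how large $k'_T$ must be to accommodate this branching, and plausibly the induction must be strengthened to carry auxiliary divisibility and parity conditions substantially beyond the bare statement of the conjecture --- which is precisely why the present paper, and its predecessors in the Thomassen program, advance one tree (or one tree family) at a time.
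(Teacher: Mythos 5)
The statement you are addressing is a \emph{conjecture}, and the paper does not prove it: it only verifies the special case $T=P_5$ (Theorem~\ref{theorem:main-theorem-bip}, giving $k'_{P_5}\le 48$). Your proposal does not prove it either --- it is a programme, not an argument, and you concede this yourself when you write that the second step ``is where the true difficulty of Conjecture~\ref{conj:dec-bip} lies.'' Concretely, two essential steps are missing. First, the claim that a mod-$m$ in-degree prescription ``should force exactly the right number of edges of each type to be present at each vertex, so that the greedy procedure balances locally'' is unsubstantiated, and the paper itself shows why it fails even in the simplest nontrivial case: for $T=P_5$ the orientation/factorization machinery (Lemma~\ref{lem:tight-good-initial-decomposition}, Lemma~\ref{lemma:special-decomposition}, Lemma~\ref{lemma:decomposition-Tout}) only yields a decomposition into \emph{trails} of length $5$, some of which are not paths because the degree prescriptions cannot prevent a constructed subgraph from revisiting a vertex. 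The entire content of Section~\ref{sec:main} is the delicate edge-swapping argument needed to disentangle these trails, and your sketch contains no analogue of it for a general branching tree.

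Second, the induction on a leaf-edge $e$ attached at $u\in V(T')$ has a well-known gap: even granting a $T'$-decomposition of a suitable subgraph of $G$, you have no control over \emph{where} the copies of $T'$ land, so there is no guarantee that each copy has an unused reserved edge incident to its image of $u$. Repairing this requires strengthening the induction hypothesis to a rooted or ``balanced'' decomposition statement with prescribed attachment multiplicities at every vertex --- which is exactly the strengthening you gesture at in your last paragraph but do not formulate or prove. As written, the proposal establishes nothing beyond what was already known; to match the paper you would at minimum need to restrict to $T=P_5$ and supply (i) a concrete factorization lemma in place of the vague orientation step and (ii) a disentangling argument in place of the greedy assembly.
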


Recently, Barát and Gerbner, and Thomassen independently proved that
Conjectures~\ref{conj:dec}~and~\ref{conj:dec-bip}  are equivalent. The next theorem states this result precisely.

\begin{theorem}[Barát--Gerbner~\cite{BaGe14}; Thomassen~\cite{Th13a}]\label{theorem:conj-equivalence}
Let $T$ be a tree on $t$ vertices, with $t>4$. The following two statements are
equivalent.
\begin{itemize}
\item[(i)] There exists a natural number $k'_T$ such that, if $G$ is a
    $k'_T$-edge-connected bipartite graph and $|E(G)|$ divisible
    by $|E(T)|$, then $G$ admits a $T$-decomposition.
\item[(ii)] There exists a natural number $k_T$ such that, if $G$ is a
    $k_T$-edge-connected graph and $|E(G)|$ is divisible by
    $|E(T)|$, then $G$ admits a $T$-decomposition.
\end{itemize}
	Furthermore, 
	\(
		k_T\leq 4k'_T + 16(t-1)^{6t-5}
	\)
	and, if in addition $T$ has diameter at most \(3\), then
	\(
		k_T\leq 4k'_T + 16t(t-1).
	\)
\end{theorem}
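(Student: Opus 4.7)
The direction (ii) $\Rightarrow$ (i) is immediate, since every bipartite graph is a graph; any $k_T$ that works in (ii) also works as $k'_T$ in (i), giving $k'_T \leq k_T$.

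For the nontrivial direction (i) $\Rightarrow$ (ii), suppose (i) holds with constant $k'_T$ and set $k_T := 4k'_T + 16(t-1)^{6t-5}$. Given a $k_T$-edge-connected graph $G$ with $|E(G)|$ divisible by $|E(T)|$, the plan is to exhibit a spanning bipartite subgraph $H \subseteq G$ that is $k'_T$-edge-connected, together with a $T$-decomposition of $G - E(H)$. Divisibility of $|E(H)|$ by $|E(T)|$ is then automatic from that of $|E(G)|$, and invoking (i) on $H$ yields a $T$-decomposition of $G$ as the union of the two parts.

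I would build $H$ in two stages. First, fix a bipartition $(A,B)$ of $V(G)$, let $H_0$ be the spanning subgraph of cut edges, and let $R_0 := E(G) \setminus E(H_0)$ denote the non-cut edges. Second, enter an absorption phase, in which each $e \in R_0$ is processed in turn: using the connectivity slack provided by $k_T$, one locates a copy of $T$ in the current graph whose edge set consists of $e$ together with $|E(T)|-1$ cut edges drawn from $H_0$, and removes this copy from $G$. Iterating this absorbs all of $R_0$; what remains is a bipartite spanning subgraph $H \subseteq H_0$. The factor $4$ on $k'_T$ absorbs the standard loss incurred when passing from a graph to a highly edge-connected bipartite subgraph, while the additive $16(t-1)^{6t-5}$ accounts, via a union bound over embeddings of $T$ in neighborhoods of radius at most $t-1$, for the connectivity that can be destroyed in the absorption phase.

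The main obstacle is the absorption step: one must realize every non-cut edge $e$ as a tree-edge of a copy of $T$ that otherwise stays entirely within the cut, and do so without depleting any local cut of the bipartite remainder below $k'_T$. Since tree-edges alternate between bipartition classes of $T$, placing $e$ as the unique bipartition-violating edge forces a \emph{flip} of colors on one side of $e$ in $T$; this demands enough cut-neighbors of both endpoints of $e$ on prescribed sides of the partition, which $k_T$-edge-connectivity supplies, but the global orchestration of these choices is delicate. For trees of diameter at most $3$, the absorption simplifies markedly: a non-cut edge $uv$ can be made the central edge of a bistar, and the remaining leaves are found among the cut-neighbors of $u$ and $v$, of which there are always at least $t-2$ by edge-connectivity. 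Since only pairs of adjacent vertices need to be considered, the combinatorial cost drops to $t(t-1)$, giving the sharper bound $k_T \leq 4k'_T + 16 t(t-1)$.
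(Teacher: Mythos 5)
This theorem is attributed to Bar\'at--Gerbner~\cite{BaGe14} and Thomassen~\cite{Th13a} and is used as a black box in the paper; there is no proof of it in the source you were asked to reproduce, so there is nothing to compare your argument against line by line. I will therefore assess your argument on its own.

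Your direction (ii)~$\Rightarrow$~(i) is correct and standard. The absorption phase you sketch for (i)~$\Rightarrow$~(ii), however, contains a fatal counting flaw. You propose to fix a bipartition $(A,B)$ of $V(G)$, let $H_0$ consist of the cut edges, and then, for \emph{every} non-cut edge $e$, extract from the current graph a copy of $T$ whose edges are $e$ together with $t-2$ cut edges. Each such extraction therefore removes one edge from $R_0$ and $t-2$ edges from the cut. The number of non-cut edges $|R_0|$ can be a constant fraction of $|E(G)|$; even if you start from a max-cut bipartition (which your sketch does not require), the number of cut edges is at most $|E(G)|$ and $|R_0|$ can be as large as about $|E(G)|/2$. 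Absorbing all of $R_0$ would then consume roughly $(t-2)\cdot|E(G)|/2$ cut edges, which for $t>4$ exceeds the number of cut edges available. So the absorption cannot terminate with a nonempty bipartite remainder $H$, let alone a $k'_T$-edge-connected one, and the scheme collapses for essentially every dense instance. The additive term $16(t-1)^{6t-5}$ in the bound is a constant independent of $|E(G)|$, so it cannot compensate for a deficit that scales linearly in $|E(G)|$; the role of that constant in the cited proofs is quite different (it comes from controlling local connectivity while redistributing edges, not from a union bound over copies of $T$ absorbed one at a time). To repair the argument you would need a mechanism that removes many non-cut edges per copy of $T$, or an entirely different reduction; the one-non-cut-edge-per-tree absorption is intrinsically too wasteful.
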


In this paper we verify Conjecture~\ref{conj:dec-bip} (and
Conjecture~\ref{conj:dec}) in the special case $T$ is the path of
length five.  More specifically, we prove that \(k'_{P_5} \leq 48\).

In our proof we use a generalization of the technique used by
  Thomassen~\cite{Th08b} to obtain an initial decomposition into
  trails of length~\(5\). Then, inspired by the ideas used
  in~\cite{BoMoWa14+}, we obtain a result that allows us to
  ``disentangle'' the undesired trails of this initial decomposition
  and construct a pure path decomposition.
  
  The paper is organized as follows. In Section~\ref{sec:def} we give
  some definitions, establish the notation and state some auxiliary
  results needed in the proof of our main result, presented in
  Section~4.  In Section~\ref{sec:canonical-decomposition} we prove
  that a highly edge-connected graph admits a ``canonical''
  decomposition into paths and trails of length~$5$ satisfying certain
  properties.  In Section~\ref{sec:main} we show how to switch edges
  between the elements of the above decomposition and obtain a
  decomposition into paths of length~$5$. We finish with some
  concluding remarks in Section~$5$.


An extended abstract~\cite{BoMoOsWa15-LAGOS} of this work was
    presented at the conference \textsc{lagos 2015}. Further
    improvements were obtained since then, and these are incorporated
    into this work. In special, a bound for $k'_{P_5}$ was improved
    from $134$ to $48$.  Moreover, we~\cite{BoMoOsWa15+thomassen} have
    been able to generalize some of the ideas presented here to prove
    that Conjecture~\ref{conj:dec} holds for paths of any given
    length. We consider that the ideas and techniques presented in
    this paper are easier to be understood, and they can be seen as a
    first step towards obtaining more general results not only for
    paths of fixed length, but also for other type of
    results~\cite{BoMoOsWa15+reg}. As the generalization is not so
    straightforward, we believe that those interested on the more
    general case will benefit reading this work first.


\section{Notation and auxiliary results}\label{sec:def}

The basic terminology and notation used in this paper are standard
(see, e.g.~\cite{Bo98,Di10}).  
A \emph{graph} has no loops or multiple edges. A \emph{multigraph} may
have multiple edges but no loops.  A \emph{directed graph} (resp. \emph{directed
multigraph}) is a graph (resp. multigraph) together with an orientation
of its edges.  More precisely, a directed graph (resp. multigraph) is
a pair $\vec G=(V,A)$ consisting of a vertex-set $V$ and a set $A$ of
ordered pairs of distinct vertices, called \emph{directed edges} (or,
simply, \emph{edges}).
%
When a pair $(V,A)$ that defines a (directed) graph $G$ is not
  given explicitly, such a pair is assumed to be $(V(G),A(G))$.  Given
  a directed graph $\vec G$, the set of edges obtained by removing
  the orientation of the directed edges in $A(\vec G)$ is denoted by
  $\hat A(\vec G)$ and is called the \emph{underlying edge-set} of
  $A(\vec G)$.  We denote by $G$ the \emph{underlying graph} of $\vec
  G$, that is, the graph with vertex-set $V(\vec G)$ and edge-set
  $\hat A(\vec G)$.  We say that \(\vec G\) is \(k\)-edge-connected if
  \(G\) is \(k\)-edge-connected. We denote by $G=(A\cup B,E)$ a
  bipartite graph $G$ on vertex classes $A$ and $B$.

We denote by $Q=v_0v_1\cdots v_k$ a sequence of vertices of a graph
$G$ such that $v_iv_{i+1}\in E(G)$, for
$i=0,\ldots,k-1$. If the edges $v_iv_{i+1}$,
$i=0,\ldots,k-1$, are all disctint, then we say that $Q$ is a
\emph{trail}; and if all vertices in $Q$ are distinct, then we say
that $Q$ is a \emph{path}. The \emph{length} of $Q$ is $k$ (the number of its edges).  
A path  of length~$k$ is denoted by $P_k$, and is also called a \emph{$k$-path}.
If \(\vec Q= v_0v_1\cdots v_k\) is a sequence of vertices of a directed graph \(\vec G\),
we say that \(\vec Q\) is a \emph{path} (resp. \emph{trail}) if \(Q\) is 
a path (resp. trail) in \(G\).

We say that a directed graph \(\vec H\) is a \emph{copy} of a graph
\(G\) if \(H\) is isomorphic to~\(G\).
We say that a set $\{H_1,\ldots,H_k\}$ of graphs is a
\emph{decomposition} of a graph $G$ if $\bigcup_{i=1}^k
E(H_i) = E(G)$ and $E(H_i)\cap E(H_j)=\emptyset$ for all $1\leq i<j\leq
k$. 
For a directed graph, the definition is analogous.
Let~\(\mathcal{H}\) be a family of graphs. An
\emph{\(\mathcal{H}\)-decomposition} $\D$ of~\(\vec G\) is a
decomposition of~\(\vec G\) such that each element of $\D$ is
a copy of an element of~\(\mathcal{H}\). If~\(\mathcal{H} =
\{H\}\) we say that $\D$ is an~\emph{\(H\)-decomposition}.

In what follows, we present some concepts and auxiliary results
  that will be used in the forthcoming sections. We assume here that
  the set of natural numbers does not contain zero.

\subsection{Vertex splittings}
Let $G=(V,E)$ be a graph and $x$ a vertex of \(G\). 
A set $S_x=\{d_1,\ldots,d_{s_x}\}$ of $s_x$ natural numbers is called a 
\emph{subdegree sequence for $x$} if
$d_1+\ldots+d_{s_x}=d_G(x)$. We say that a graph $G'$ is obtained 
by an \emph{$(x,S_x)$-splitting} 
of \(G\)
if $G'$ is composed of 
$G-x$ together with $s_x$ new vertices $x_1,\ldots,x_{s_x}$ and {$d_G(x)$ new edges} satisfying the 
following conditions: 
\begin{itemize}
 \item $d_{G'}(x_i)=d_i$, for $1\leq i\leq s_x$;
 \item $\bigcup_{i=1}^{s_x} N_{G'}(x_i) = N_G(x)$.
\end{itemize}
Let $G$ be a graph and consider a set $V'=\{v_1,\ldots,v_r\}$ of $r$
vertices of $G$. Let $S_{v_1},\ldots,S_{v_r}$ be subdegree sequences
for $v_1,\ldots,v_r$, respectively. {Let $H_1,\ldots,H_r$ be
  graphs obtained as follows: $H_1$ is obtained by a
  $(v_1,S_{v_1})$-splitting of $G$, the graph $H_2$ is obtained by a
  $(v_2,S_{v_2})$-splitting of $H_1$, and so on, up to $H_r$, which is
  obtained by a $(v_r,S_{v_r})$-splitting of $H_{r-1}$.} In this case,
we say that each graph $H_i$ is a
\emph{$\{S_{v_1},\ldots,S_{v_i}\}$-detachment} of $G$. Roughly, a
detachment of a graph $G$ is a graph obtained by successive
applications of splitting operations on vertices of $G$ (see 
Figure~\ref{fig:detachment}). 

\begin{figure}[h]
	\centering
	\begin{tikzpicture}[scale = 1.5]


	\node (0) [black vertex] at (.5,2.5) {};
	
	\node (1) [black vertex] at (-.5,1.5) {};
	\node (2) [red vertex] at (1.5,1.5) {};
	
	\node (3) [black vertex] at (-.5,.5) {};
	\node (4) [red vertex] at (.5,.5) {};
	\node (5) [black vertex] at (1.5,.5) {};
	
	\node (6) [black vertex] at (-.5,-.5) {};
	\node (7) [black vertex] at (1.5,-.5) {};

	\node (a) [] at ($(0) - (0.25,0)$) {$a$};
	\node (b) [] at ($(1) - (0.25,0)$) {$b$};
	\node (c) [] at ($(2) + (0.25,0)$) {$c$};
	\node (d) [] at ($(3) - (0.25,0)$) {$d$};
	\node (e) [] at ($(4) - (0.0,+0.25)$) {$e$};
	\node (f) [] at ($(5) + (0.25,0)$) {$f$};
	\node (g) [] at ($(6) - (0.25,0)$) {$g$};
	\node (h) [] at ($(7) + (0.25,0)$) {$h$};
	
	\node (G) [] at ($(4) + (0.0,-1.5)$) {$G$};
	
	\draw[thick] (0) -- (1);
	\draw[thick] (0) -- (2);
	\draw[thick] (1) -- (2);
	\draw[thick] (1) -- (3);
	\draw[thick] (1) -- (4);
	\draw[thick] (2) -- (4);
	\draw[thick] (2) -- (5);
	\draw[thick] (3) -- (4);
	\draw[thick] (3) -- (6);
	\draw[thick] (4) -- (5);
	\draw[thick] (4) -- (6);
	\draw[thick] (4) -- (7);
	\draw[thick] (5) -- (7);
	\draw[thick] (6) -- (7);

\end{tikzpicture}\hspace{1cm}
	\begin{tikzpicture}[scale = 1.5]


	\node (0) [black vertex] at (.5,2.5) {};
	
	\node (1) [black vertex] at (-.5,1.5) {};
	
	\node (2a) [red vertex] at (1.35,1.55) {};
	\node (2b) [red vertex] at (1.5,1.4) {};
	
	\node (3) [black vertex] at (-.5,.5) {};
	
	\node (4a) [red vertex] at (.5,.67) {};
	\node (4b) [red vertex] at (.38,.45) {};
	\node (4c) [red vertex] at (.62,.45) {};
	
	\node (5) [black vertex] at (1.5,.5) {};
	
	\node (6) [black vertex] at (-.5,-.5) {};
	\node (7) [black vertex] at (1.5,-.5) {};

	\node (a) [] at ($(0) - (0.25,0)$) {$a$};
	\node (b) [] at ($(1) - (0.25,0)$) {$b$};
 	\node (c1) [] at ($(2a) + (0.22,0.1)$) {$c_1$};
 	\node (c2) [] at ($(2b) + (0.25,0)$) {$c_2$};
	\node (d) [] at ($(3) - (0.25,0)$) {$d$};
	\node (e1) [] at ($(4a) - (0.0,-0.25)$) {$e_1$};
	\node (e2) [] at ($(4b) - (0.06,+0.25)$) {$e_2$};
	\node (e3) [] at ($(4c) - (-0.06,+0.25)$) {$e_3$};
	\node (f) [] at ($(5) + (0.25,0)$) {$f$};
	\node (g) [] at ($(6) - (0.25,0)$) {$g$};
	\node (h) [] at ($(7) + (0.25,0)$) {$h$};
	
	\node (H) [] at ($(4a) + (0.0,-1.67)$) {$H$};
	
	\draw[thick] (0) -- (1);
	\draw[thick] (0) -- (2a);
	\draw[thick] (1) -- (2a);
	\draw[thick] (1) -- (3);
	\draw[thick] (1) -- (4a);
	\draw[thick] (2b) -- (4a);
	\draw[thick] (2b) -- (5);
	\draw[thick] (3) -- (4b);
	\draw[thick] (3) -- (6);
	\draw[thick] (4c) -- (5);
	\draw[thick] (4b) -- (6);
	\draw[thick] (4c) -- (7);
	\draw[thick] (5) -- (7);
	\draw[thick] (6) -- (7);

\end{tikzpicture}
        \caption{{A graph $G$ and a graph $H$ that is an
          $\{S_c,S_e\}$-detachment of~$G$, where $S_c=\{2,2\}$ and $S_e=\{2,2,2\}$. }}
	\label{fig:detachment}
\end{figure}
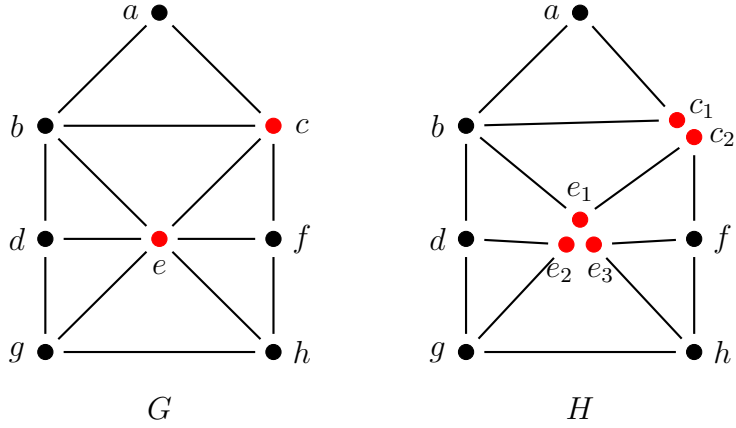

The next result provides sufficient conditions for the existence of
$2k$-edge-connected detachments of $2k$-edge-connected graphs.

\begin{lemma}[Nash--Williams~\cite{Na85}]\label{lemma:splitting-of-vertices}
  Let $k$ be a natural number, and \(G\) be a
    \(2k\)-edge-connected graph with 
    $V(G)=\{v_1,\ldots,v_n\}$. For every $v$ in $V(G)$,  let
      $S_{v}=\{d^v_1,\ldots,d^v_{s_{v}}\}$ be a {subdegree sequence for
      $v$ such that $d^v_i\geq 2k$ for $i=1,\ldots,s_{v}$. Then, there
      exists a $2k$-edge-connected} 
      $\{S_{v_1},\ldots,S_{v_{n}}\}$-detachment of $G$.
\end{lemma}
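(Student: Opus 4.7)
The plan is to argue by induction on $N := \sum_{v \in V(G)} (s_v - 1)$, the total number of ``extra pieces'' required by the prescribed splittings. If $N = 0$, every $s_v = 1$ and $G$ itself is the desired detachment. For the inductive step, I would reduce to a single \emph{binary} splitting. Pick any $v$ with $s_v \geq 2$ and write $S_v = \{d_1\} \cup \{d_2, \ldots, d_{s_v}\}$. If I can produce a $2k$-edge-connected graph $H$ from $G$ by a $(v, \{d_1, d - d_1\})$-splitting, where $d = d_G(v)$, then $H$ contains two new vertices: $v'$ of degree $d_1$ and $v''$ of degree $d - d_1 = d_2 + \cdots + d_{s_v} \geq 2k$. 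Setting $S_{v''} := \{d_2, \ldots, d_{s_v}\}$ and keeping $S_u$ unchanged for every other $u$, the hypotheses still hold for $H$ and $N$ has dropped by one; the induction hypothesis then finishes the argument.

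The crux is therefore the \emph{binary splitting claim}: if $G$ is $2k$-edge-connected, $v \in V(G)$, and $d_G(v) = a + b$ with $a, b \geq 2k$, then $G$ admits a $2k$-edge-connected $(v, \{a, b\})$-detachment. I would prove this by invoking Mader's undirected splitting-off theorem. Assume first that $d_G(v), a, b$ are all even (the parity case is discussed at the end). Mader's theorem yields a pairing of the edges at $v$ into $d/2$ pairs $\{vu_i, vw_i\}$ such that the multigraph $G^*$ obtained from $G - v$ by replacing each pair with a single edge $u_iw_i$ remains $2k$-edge-connected. I would then distribute $a/2$ of these pairs into a bucket $B_1$ and the remaining $b/2$ into $B_2$, and let $v_1$ (resp.\ $v_2$) be incident to the edges coming from pairs in $B_1$ (resp.\ $B_2$); the result is a $(v, \{a, b\})$-splitting $G'$ of $G$.

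To verify that $G'$ is $2k$-edge-connected, consider any nonempty proper cut $\partial_{G'}(X)$. If $\{v_1, v_2\} \subseteq X$ or $\{v_1, v_2\} \cap X = \emptyset$, merging $v_1, v_2$ back to $v$ recovers a cut of $G$ of the same size (since no $v_1v_2$-edges exist), so $|\partial_{G'}(X)| \geq 2k$. If $v_1 \in X$ and $v_2 \notin X$ (the symmetric case is analogous), the extremes $X = \{v_1\}$ and $X = V(G') \setminus \{v_2\}$ give $|\partial_{G'}(X)| \in \{a, b\} \geq 2k$ directly. Otherwise, setting $T := X \setminus \{v_1\}$, a short case analysis on each Mader pair $\{vu, vw\}$ (according to the sides of $T$ on which $u,w$ lie, and according to which bucket contains the pair) shows that each pair contributes at least as much to $|\partial_{G'}(X)|$ as its corresponding lifted edge contributes to $|\partial_{G^*}(T)|$; hence $|\partial_{G'}(X)| \geq |\partial_{G^*}(T)| \geq 2k$ by the $2k$-edge-connectivity of $G^*$.

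The main obstacle is the parity of the bucket sizes, since Mader pairs contain two edges each. When $d_G(v)$ is odd, or when $a$ and $b$ are both odd, I would temporarily add an auxiliary edge from $v$ to a carefully chosen neighbor to enforce even parities, perform the splitting above, and then remove the auxiliary edge while assigning any resulting loose edge to whichever bucket needs the odd count; a minor refinement of the cut analysis continues to give the $2k$-edge-connectivity bound. Composing these binary splittings through the induction then delivers the full detachment result.
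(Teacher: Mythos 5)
The paper cites this lemma to Nash--Williams~\cite{Na85} without giving a proof, so I am assessing your argument on its own merits. Your reduction of the general detachment to a single binary splitting by induction on $\sum_v (s_v-1)$ is sound, and your cut analysis for the ``all even'' case is correct and pleasantly short: for a cut $X$ with $v_1\in X$, $v_2\notin X$ and $T=X\setminus\{v_1\}$, every unsplit Mader pair contributes at least as much to $\partial_{G'}(X)$ as its lifted edge contributes to $\partial_{G^*}(T)$, so $|\partial_{G'}(X)|\ge|\partial_{G^*}(T)|\ge 2k$. (One remark: what you actually need is not the single-lift Theorem~\ref{thm:Mader} but the complete splitting-off lemma of Lov\'asz, which produces a pairing of all $d(v)$ edges at once; naively iterating single admissible lifts is not obviously possible when $d(v)\ge 4k$, since $v$ could become a cut vertex of an intermediate graph.)

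The genuine gap is exactly at the point you flag as ``the main obstacle.'' Splitting a Mader pair $\{vu,vw\}$ across the two buckets destroys the cut bound: if $u\in T$ and $w\notin T$, the lifted edge contributes $1$ to $\partial_{G^*}(T)$ while $v_1u$ and $v_2w$ contribute $0$ to $\partial_{G'}(X)$, so you can only conclude $|\partial_{G'}(X)|\ge 2k-1$. The auxiliary-edge patch does not repair this: after adding $vw$, performing the (now even) split, and deleting $v_2w$, a cut of the augmented detachment of size exactly $2k$ that separates $v_2$ from $w$ drops to size $2k-1$. Moreover, when $d(v)$ is even but both $a$ and $b$ are odd, adding a single auxiliary edge makes $d(v)$ odd and does not even set up the parities. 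Handling these odd cases is the substantive part of the detachment theorem and is not a ``minor refinement'' of the even case; as written, the proposal does not establish it.
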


\subsection{Edge liftings}
Let $G = (V,E)$ be a graph that contains vertices \(u,v,w\) such that $uv$, $vw \in E$. The multigraph 
$G' = \big(V, (E \setminus 
\aset{uv,vw}) \cup \aset{uw}\big)$ is called a \emph{\(uw\)-lifting}
(or, simply, a \emph{lifting}) at \(v\). 
If for all distinct pairs $x$, 
$y \in V \setminus \aset{v}$, the maximum number of edge-disjoint paths between 
$x$ 
and $y$ in $G'$ is the same as in $G$, then the lifting at \(v\) is called 
\emph{admissible}.
If \(v\) is a vertex of degree \(2\),
then the lifting at \(v\) is always admissible. 
This lifting together with the deletion of \(v\) 
is called a \emph{supression} of \(v\).

The next result, known as Mader's Lifting Theorem, presents  
conditions for a multigraph to have an admissible lifting.

\begin{theorem}[Mader~\cite{Ma78}]\label{thm:Mader}
	Let $G$ be a finite multigraph and let $v$ be a vertex of $G$
        that is not a cut-vertex. If $d_G(v) \geq 4$ and  $v$ has at least 2 neighbors, 
	 then there exists an admissible lifting at $v$.
\end{theorem}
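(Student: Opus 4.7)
My plan is to reduce admissibility to a cut-counting statement, and then to drive a submodularity argument to a contradiction. The first step is to reformulate the problem: if we lift the pair $\{uv, vw\}$ to $uw$, then for any $X \subseteq V(G) \setminus \{v\}$ one checks directly that $d_{G'}(X) = d_G(X) - 2$ when both $u, w \in X$, and $d_{G'}(X) = d_G(X)$ otherwise. By Menger's theorem, the lifting preserves $\lambda_G(x,y)$ for all $x, y \ne v$ if and only if no set $X \subseteq V(G) \setminus \{v\}$ with $\{u,w\} \subseteq X$ and $d_G(X) = \lambda_G(x,y)$ for some $x \in X$, $y \in V(G) \setminus (X \cup \{v\})$ exists. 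Call such an $X$ a \emph{tight set} (witnessing the pair $\{x,y\}$). Thus a lifting at $v$ is admissible if and only if no tight set contains both of its endpoints.

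I would then argue by contradiction: assume every pair of neighbors $(u,w)$ of $v$ (counted with the multiplicity of the edges $uv, vw$) yields an inadmissible lifting, so that for each such pair there is a tight set $T_{u,w}$ containing $u, w$ but not $v$. The core tool is the submodular inequality
\[
d_G(A) + d_G(B) \geq d_G(A \cap B) + d_G(A \cup B),
\]
together with its posimodular counterpart
\[
d_G(A) + d_G(B) \geq d_G(A \setminus B) + d_G(B \setminus A),
\]
which both hold for arbitrary $A, B \subseteq V(G)$. If $T_1, T_2$ are two tight sets (avoiding $v$), one of these inequalities, combined with the fact that $d_G(Y) \geq \lambda_G(x,y)$ whenever $Y$ separates $x$ from $y$, forces either $T_1 \cap T_2$ and $T_1 \cup T_2$, or $T_1 \setminus T_2$ and $T_2 \setminus T_1$, to be tight as well (provided they are nonempty and still separate some pair in $V(G) \setminus \{v\}$, which is where the condition that $v$ is not a cut-vertex enters, since $G - v$ is connected).

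With this uncrossing mechanism in hand, I would pick an inclusion-maximal tight set $T$, and then select a second tight set $T'$ witnessing a pair of neighbors $(u,w)$ with $u \in T$ and $w \notin T$ — such a pair must exist because $v$ has at least two neighbors and the tight set $T$ cannot contain all of $N(v)$ (otherwise all edges from $v$ enter $T$, and then $d_G(v) \geq 4$ forces $d_G(T) \geq \lambda_G(x,y) + 2$, contradicting tightness). Applying submodularity to $T, T'$ yields a strictly larger tight set, contradicting the maximality of $T$.

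The main obstacle is the bookkeeping in this uncrossing step: one has to carry along the witnesses $\{x,y\}$ for each tight set, verify that after taking unions, intersections, or symmetric differences the resulting set still separates some admissible pair of vertices in $V(G) \setminus \{v\}$, and rule out the degenerate cases where $A \cap B$ or $A \setminus B$ becomes empty or equals $V(G) \setminus \{v\}$. The hypotheses $d_G(v) \geq 4$ and $|N(v)| \geq 2$ are used precisely to supply enough ``slack'' to ensure that at least one of the submodular descents yields a genuinely new tight set, and the non-cut-vertex hypothesis guarantees that the pair $(x,y)$ can be chosen to survive the set-theoretic operations via a path in $G - v$.
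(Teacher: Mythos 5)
The paper does not prove this theorem: it is cited as Mader's Lifting Theorem~\cite{Ma78} and used as a black box, so there is no in-paper proof for comparison. Evaluating your sketch on its own merits: the overall strategy — translate admissibility into a statement about cuts, exploit submodularity/posimodularity to uncross, and derive a contradiction from a maximal obstruction — is indeed the standard route to Mader's theorem, as in Frank's or Lovász's treatments.

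However, there is a genuine error at the foundation. Your reformulation says the lifting is admissible if and only if no \emph{tight} set $X$ (one with $d_G(X) = \lambda_G(x,y)$ for some witness pair $x \in X$, $y \notin X \cup \{v\}$) contains both $u$ and $w$. This ``iff'' is false. The lifting reduces $d_G(X)$ by $2$ on the relevant sets, so $\lambda_{G'}(x,y) < \lambda_G(x,y)$ occurs precisely when some $X$ with $u,w \in X \not\ni v$, $x \in X$, $y \notin X$ satisfies $d_G(X) \leq \lambda_G(x,y) + 1$, not $d_G(X) = \lambda_G(x,y)$. The correct notion is the \emph{dangerous set} (with slack $1$), and since $d_G(X)$ and $\lambda_G(x,y)$ need not have the same parity, the $+1$ case cannot be reduced to the equality case. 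Every subsequent step in your argument (uncrossing tight sets, maximal tight sets, the bound ``$T$ cannot contain all of $N(v)$'') is carried out with the wrong inequality and would have to be redone with the dangerous-set threshold; the quantitative claim $d_G(T) \geq \lambda_G(x,y) + 2$ should actually read $d_G(T) \geq \lambda_G(x,y) + d_G(v)$. Beyond that, the uncrossing step you flag as the ``main obstacle'' is in fact where Mader's proof does its real work: one cannot simply produce an ever-larger dangerous set from two arbitrary ones, because $T \cap T'$ or $T \cup T'$ may fail to separate any admissible pair; the standard argument instead shows that $N(v)$ is covered by at most three maximal dangerous sets, bounds the number of $v$-edges entering each by roughly $d_G(v)/2$, and reaches a contradiction with $d_G(v) \geq 4$. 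Your outline is in the right spirit but, as written, both the central characterization and the concluding counting argument are incorrect or missing.
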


The next lemma will be useful to apply Mader's Lifting Theorem.  For
two vertices $x$,$y$ in a graph $G$, we denote by \(p_G(x,y)\) the
maximum number of edge-disjoint paths between \(x\) and \(y\) in $G$.

\begin{lemma}\label{lemma:non-cut-vertex}
   Let $k$ be a natural number. If \(G\) is a multigraph and $v$
    is a vertex in~$G$ such that $d(v)<2k$ and $p_G(x,y)\geq k$ for
    any two distinct neighbors \(x,y\) of \(v\), then $v$ is not a
    cut-vertex of~$G$.
\end{lemma}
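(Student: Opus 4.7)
The plan is to prove the contrapositive by a direct edge-counting argument at $v$. Suppose, for contradiction, that $v$ is a cut-vertex of $G$. Then $G-v$ has strictly more connected components than $G$, so there exist two vertices $x', y' \in V(G)\setminus\{v\}$ lying in the same component of $G$ but in different components of $G - v$. The first edge of any $x'$--$y'$ walk in $G-v$ would fail to cross the cut, so every $x'$--$y'$ path in $G$ must traverse $v$. Let $x$ be the neighbor of $v$ in the component of $G-v$ containing $x'$, and $y$ the neighbor of $v$ in the component containing $y'$; such neighbors exist because $x'$ is joined to $y'$ via $v$ in $G$. Then $x\neq y$, and every $x$--$y$ path in $G$ passes through $v$.

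Next, I would apply the hypothesis to this pair $x,y$ of distinct neighbors of $v$: there exist $k$ pairwise edge-disjoint $x$--$y$ paths $P_1,\ldots,P_k$ in $G$. Since each $P_i$ is a path (so visits $v$ at most once) and must use $v$, and since $v \notin \{x,y\}$ (as $G$ has no loops by the standing convention that multigraphs are loopless), each $P_i$ contains exactly two edges incident to $v$. Because the paths are pairwise edge-disjoint, these $2k$ edges are all distinct edges of $G$ incident to $v$, giving $d_G(v) \geq 2k$. This contradicts the assumption $d_G(v) < 2k$, so $v$ is not a cut-vertex.

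I expect no real obstacle here: the only subtle point is being careful in the choice of the pair $(x,y)$ of neighbors of $v$ (as opposed to the arbitrary pair $(x',y')$ arising from the definition of cut-vertex), so that the hypothesis $p_G(x,y)\ge k$ can actually be invoked. Once $x$ and $y$ are identified as neighbors of $v$ in distinct components of $G-v$, the edge count at $v$ closes the argument immediately.
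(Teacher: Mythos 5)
Your proof is correct and follows essentially the same argument as the paper's: assume $v$ is a cut-vertex, locate two neighbors $x,y$ of $v$ in different components of $G-v$, invoke $p_G(x,y)\ge k$ to get $k$ edge-disjoint $x$--$y$ paths each of which must pass through $v$, and count $2k$ edges at $v$ to contradict $d(v)<2k$. The only difference is that you spell out in more detail why neighbors of $v$ exist in the relevant components of $G-v$, whereas the paper takes this for granted.
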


\begin{proof}
  Let $k$, $G$ and $v$ be as in the hypothesis of the lemma. Suppose,
  by contradiction, that \(v\) is a cut-vertex. Let \(G_x\) and
  \(G_y\) be two components of \(G -v\).  Let \(x \in V(G_x)\) and
  \(y\in V(G_y)\) be two neighbors of \(v\).  By hypothesis, $G$
    has at least \(k\) edge-disjoint paths joining \(x\) to \(y\).
  Since \(v\) is a cut-vertex, each of these paths must contain \(v\).
  Thus, \(d(v) \geq 2k\), a contradiction.
\end{proof}

\subsection{Some consequences of high connectivity}

If \(G\) is a graph that contains \(2k\) pairwise edge-disjoint
spanning trees, then, clearly, \(G\) is \hbox{\(2k\)-edge-connected.}

The converse is not true, but as the following result shows,
  every \(2k\)-edge-connected graph contains \(k\) such trees.

\begin{theorem}[Nash-Williams~\cite{Na61}; Tutte~\cite{Tu61}]
\label{thm:edge-disjoint-spanning-trees}
	Let \(k\) be a natural number. 
	If \(G\) is a \(2k\)-edge-connected graph, 
	then \(G\) contains \(k\) pairwise edge-disjoint spanning trees.
\end{theorem}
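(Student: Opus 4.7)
The plan is to deduce the statement from the partition form of the Nash--Williams/Tutte tree-packing theorem, which I will invoke as a black box: a graph $H$ admits $k$ pairwise edge-disjoint spanning trees if and only if, for every partition $\mathcal{P}=\{V_1,\ldots,V_r\}$ of $V(H)$ into $r\geq 1$ nonempty parts, the number $e_H(\mathcal{P})$ of edges of $H$ whose endpoints lie in distinct parts of $\mathcal{P}$ satisfies $e_H(\mathcal{P})\geq k(r-1)$.

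First I would fix a $2k$-edge-connected graph $G$ and an arbitrary partition $\mathcal{P}=\{V_1,\ldots,V_r\}$ of $V(G)$, with the goal of verifying $e_G(\mathcal{P})\geq k(r-1)$. The case $r=1$ is vacuous, so assume $r\geq 2$. By $2k$-edge-connectivity, the edge-cut $[V_i,V(G)\setminus V_i]$ contains at least $2k$ edges for each index $i$. Summing these lower bounds over $i$ and observing that each edge crossing $\mathcal{P}$ is counted exactly twice on the left-hand side, we obtain
\[
2\,e_G(\mathcal{P}) \;=\; \sum_{i=1}^{r}\bigl|[V_i,V(G)\setminus V_i]\bigr| \;\geq\; 2kr,
\]
whence $e_G(\mathcal{P})\geq kr\geq k(r-1)$. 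An application of the tree-packing theorem then produces the desired $k$ edge-disjoint spanning trees.

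The main obstacle in this plan is of course the tree-packing theorem itself, whose proof is nontrivial. A natural self-contained route is via the matroid union theorem applied to $k$ disjoint copies of the cycle matroid of $G$: one must show that the rank of the union equals $k(|V(G)|-1)$, and the matroid union rank formula reduces this to the inequality $k\cdot r(F)+|E(G)\setminus F|\geq k(|V(G)|-1)$ for every $F\subseteq E(G)$. Contracting the connected components of the spanning subgraph $(V(G),F)$ converts this inequality into precisely the partition bound established above, so the only substantive input beyond matroid union is exactly the cut-counting argument that $2k$-edge-connectivity makes available.
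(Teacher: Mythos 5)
Your argument is correct, but note that the paper does not prove this statement; it is quoted as a classical theorem of Nash--Williams and Tutte, so there is no in-paper proof to compare against. What you give is the standard derivation of the $2k$-edge-connectivity corollary from the partition form of the tree-packing theorem: for any partition $\mathcal{P}=\{V_1,\ldots,V_r\}$ with $r\geq 2$, each $\bigl|[V_i,V(G)\setminus V_i]\bigr|\geq 2k$, and summing over $i$ counts each crossing edge exactly twice, giving $e_G(\mathcal{P})\geq kr\geq k(r-1)$, which is exactly the packing criterion. The concluding remark about matroid union is a correct sketch of how the packing theorem itself can be proved, but it is not needed for the reduction; the only substantive content you supply beyond the black-box theorem is the clean double-counting step, which is sound.
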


We state now a result (Theorem~\ref{thm:Thomassen12}) that we shall
use in the proof of Lemma~\ref{lem:tight-good-initial-decomposition}. 
The latter allows us to treat highly edge-connected bipartite graphs
as regular bipartite graphs; it is a slight generalization of
Proposition \(2\) in~\cite{Th13a}.  Given an orientation \(O\) of a
graph \(G\), we denote by \(d^+_O(v)\) the outdegree of \(v\) in
\(O\).

\begin{theorem}[Lov{\'a}sz--Thomassen--Wu--Zhang~\cite{LoThWuZh13}]\label{thm:Thomassen12}
  Let \(k\geq 3\) be an odd natural number and \(G\)  a \((3k-3)\)-edge-connected
  graph.  Let \(p:V(G)\to\{0,\ldots,k-1\}\) be such that
  \(\sum_{v\in V(G)}p(v) \equiv |E(G)| \pmod{k}\).
  Then there is an orientation \(O\) of \(G\)
  such that \(d^+_O(v) \equiv p(v) \pmod{k}\),
  for every vertex \(v\) of \(G\).
\end{theorem}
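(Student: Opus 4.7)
The plan is to prove the statement by induction on $|V(G)|$, augmented with the technique of strengthening the inductive hypothesis so that it becomes amenable to local reductions. A natural strengthening is to allow, at a distinguished ``root'' vertex $r$, the prescribed value $p(r)$ to be replaced by an entire congruence class of allowable out-degrees; this extra flexibility is what makes the hypothesis survive the reduction steps. The base case $|V(G)|=1$ is trivial, since $|E(G)|=0$ forces $p(v)\equiv 0\pmod{k}$ and the empty orientation works.

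For the inductive step, the reduction proceeds by finding a vertex $w$ where one can either orient all its edges directly or apply an admissible lifting that decreases $|V(G)|$. Two regimes appear. First, if some vertex $w$ has small enough degree (roughly $d(w)\leq k-1$), one can choose an orientation of its edges achieving $d^+(w)\equiv p(w)\pmod{k}$, then delete $w$ and update $p$ at each neighbor by subtracting the contribution of its (now oriented) incident edge; this reduces to a smaller graph and a correspondingly adjusted function $p'$. Second, if every vertex has moderately large degree, one would apply Mader's Lifting Theorem (Theorem~\ref{thm:Mader}) at some vertex $w$, producing an admissible pair $vw,wu$ that can be lifted to an edge $vu$. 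Iterating liftings at $w$ suppresses it entirely, strictly decreasing $|V(G)|$; crucially, admissibility preserves $p_G(x,y)$ for all $x,y\neq w$, and the sum condition $\sum p(v)\equiv |E(G)|\pmod{k}$ is invariant under lifting since the number of edges drops by exactly~$1$.

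The technical heart of the argument, and the expected main obstacle, is preserving $(3k-3)$-edge-connectivity throughout the reduction while simultaneously maintaining the strengthened flexibility hypothesis. When one deletes a low-degree vertex $w$ after orienting its edges, the resulting graph $G-w$ need not remain $(3k-3)$-edge-connected; one must therefore restrict the use of this move to vertices incident to sufficiently fat edge-cuts, or absorb small-degree neighbors first via contraction. When applying Mader's theorem, one needs $w$ to not be a cut-vertex with $d(w)\geq 4$; Lemma~\ref{lemma:non-cut-vertex} provides the non-cut-vertex condition from the connectivity hypothesis. The delicate bookkeeping is to show that after each reduction the adjusted instance still satisfies a version of the hypothesis, i.e., that the ``flexibility set'' at the root remains large enough to absorb the changes forced by the lifting or deletion.

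Finally, the specific bound $3k-3$ arises from a tight quantitative analysis: each lifting can decrease the connectivity across certain cuts by a bounded amount, and $3k-3$ is chosen so that the number of allowable successive liftings at a vertex of sufficiently large degree still yields a residue that can be matched to $p(w)$ via the flexibility slack at the root. I expect the subtlest step to be arranging the induction so that the strengthened statement (with a flexibility set at the root) is itself strong enough to be self-reducing, since a naive form of the hypothesis will fail to propagate through the lifting operations; this is where the odd parity of $k$ and the exact constant $3k-3$ become genuinely needed, rather than merely convenient.
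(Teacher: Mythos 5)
This statement (Theorem~\ref{thm:Thomassen12}) is not proved in the paper at all: it is cited directly from Lov\'asz--Thomassen--Wu--Zhang~\cite{LoThWuZh13} and used as a black box in the proof of Lemma~\ref{lem:tight-good-initial-decomposition}. There is therefore no in-paper proof to compare your attempt against, and your proposal should be judged as a stand-alone reconstruction of a deep external result.

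Judged on that basis, the proposal is not a proof but an outline that defers exactly the parts that make this theorem hard, and several of the steps as written would fail. Concretely: if you suppress a vertex $w$ by repeated lifting, then every orientation of the resulting multigraph induces a ``through'' orientation of each lifted pair $vw, wu$, so $d^+_O(w)$ is forced to equal $d(w)/2$; there is no freedom left to hit an arbitrary residue $p(w)\pmod k$, so the lifting-based reduction cannot by itself handle the out-degree constraint at $w$. One would have to orient some of $w$'s edges directly before lifting, and you give no argument that this can be done compatibly with the residues at $w$'s neighbors while preserving the connectivity hypothesis --- you yourself flag this (``the delicate bookkeeping is to show\dots'') without resolving it. Moreover, the bound $3k-3$ is simply asserted to ``arise from a tight quantitative analysis'' with no such analysis offered; this constant is precisely the content of the LTWZ improvement over Thomassen's earlier $2k^2+k$. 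Finally, the actual argument in~\cite{LoThWuZh13} does not proceed via Mader lifting at all: it uses induction with a carefully tailored hypothesis involving a set of low-degree vertices, contraction across small edge cuts, and direct orientation-plus-deletion at a chosen vertex, with a parity analysis that is where the oddness of $k$ genuinely enters. Your sketch names the right high-level posture (strengthen the inductive hypothesis with flexibility at a distinguished vertex) but leaves the substantive argument entirely open, and the one concrete mechanism it proposes --- lifting --- does not by itself reduce the problem.
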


\begin{lemma}
\label{lem:tight-good-initial-decomposition}
	Let \(k\geq 3\) and \(r\) be natural numbers, $k$ odd.
	If \(G=(A_1 \cup A_2,E)\) is a \((6k-6 + 4r)\)-edge-connected bipartite 
	graph and \(|E|\) is divisible by \(k\), 
	then \(G\) admits a decomposition into two spanning 
	\(r\)-edge-connected graphs \(G_1\) and \(G_2\) such that,
	the degree in \(G_i\) of each vertex of \(A_i\) is divisible by \(k\),
	for $i=1,2$.
\end{lemma}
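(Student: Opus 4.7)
The strategy is to first set aside two reserves of $r$ edge-disjoint spanning trees that will guarantee the $r$-edge-connectivity of $G_1$ and $G_2$, and then to apply the orientation theorem of Lov\'asz--Thomassen--Wu--Zhang (Theorem~\ref{thm:Thomassen12}) to the remainder of the graph in order to arrange the divisibility conditions. Since $G$ is $(6k-6+4r)$-edge-connected and this number is even, Theorem~\ref{thm:edge-disjoint-spanning-trees} yields $3k-3+2r$ pairwise edge-disjoint spanning trees in $G$. Label $2r$ of them as $T^1_1,\dots,T^1_r,T^2_1,\dots,T^2_r$, and let $R_i$ be the spanning subgraph whose edge-set is $E(T^i_1)\cup\cdots\cup E(T^i_r)$; then $R_i$ is $r$-edge-connected. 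Let $G'$ be obtained from $G$ by deleting all the edges in $E(R_1)\cup E(R_2)$. The remaining $3k-3$ spanning trees still sit inside $G'$, so $G'$ is $(3k-3)$-edge-connected and inherits the bipartition $A_1\cup A_2$.

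Next, I would define a residue function $p\colon V(G')\to\{0,\dots,k-1\}$ tailored to cancel out the contribution of the reserves: for $u\in A_1$ put $p(u)\equiv -d_{R_1}(u)\pmod{k}$, and for $v\in A_2$ put $p(v)\equiv -d_{R_2}(v)\pmod{k}$. Using bipartiteness, $\sum_{u\in A_1}d_{R_1}(u)=|E(R_1)|$ and $\sum_{v\in A_2}d_{R_2}(v)=|E(R_2)|$, so
\[
\sum_{v\in V}p(v)\equiv -|E(R_1)|-|E(R_2)|\equiv |E(G)|-|E(R_1)|-|E(R_2)|= |E(G')|\pmod{k},
\]
where I have used that $k$ divides $|E(G)|$. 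The hypotheses of Theorem~\ref{thm:Thomassen12} are therefore met, and I obtain an orientation $O$ of $G'$ with $d^+_O(v)\equiv p(v)\pmod{k}$ at every vertex.

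Finally, I would define $E'_1$ as the set of edges of $G'$ oriented by $O$ from $A_1$ to $A_2$, and $E'_2$ as those oriented from $A_2$ to $A_1$, and then set $G_i:=R_i\cup E'_i$. Because $G'$ is bipartite, for $u\in A_1$ every edge of $G'$ at $u$ leaves $u$ precisely when it lies in $E'_1$, so $d^+_O(u)=d_{E'_1}(u)$ and hence $d_{G_1}(u)=d_{R_1}(u)+d_{E'_1}(u)\equiv 0\pmod{k}$; the case $v\in A_2$, giving $d_{G_2}(v)\equiv 0\pmod{k}$, is symmetric. Each $G_i$ is spanning and $r$-edge-connected because it contains the spanning $r$-edge-connected subgraph $R_i$. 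The delicate step is engineering the target residues so that Theorem~\ref{thm:Thomassen12} can be invoked: the hypothesis that $G$ is bipartite is exactly what is needed to turn the sum $\sum p(v)$ into a clean count of the reserved edges and to force $\sum p(v)\equiv|E(G')|\pmod{k}$, and it is also what lets the single scalar $d^+_O(v)$ encode the $G_i$-degree of the correct side.
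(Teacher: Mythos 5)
Your proof is correct and takes essentially the same route as the paper's own argument: reserve $2r$ of the $3k-3+2r$ spanning trees to serve as $r$-edge-connected backbones, apply Theorem~\ref{thm:Thomassen12} to the $(3k-3)$-edge-connected remainder with $p$ set to the negative of the reserved degrees (the paper writes $(k-1)d_{H_i}(v)$, which is the same thing modulo $k$), and then adjoin to each backbone the leftover edges leaving its side of the bipartition.
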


\begin{proof}
  Let $k$, $r$ and \(G=(A_1 \cup A_2,E)\) be as stated in the
  lemma.  By Theorem~\ref{thm:edge-disjoint-spanning-trees}, \(G\)
  contains \(3k-3 + 2r\) pairwise edge-disjoint spanning trees.  Let
  \(H_1\) be the union of \(r\) of these trees, let \(H_2\) be the
  union of other \(r\) of these trees, and let \(H_3 = G-E(H_1)
  -E(H_2)\).  Thus, \(H_1\) and \(H_2\) are \(r\)-edge-connected, and
  \(H_3\) is \((3k-3)\)-edge-connected.

  Take \(p\colon V(H_3)\to\{0,\ldots,k-1\}\) such that \(p(v) \equiv
  (k-1)d_{H_1}(v)\pmod{k}\) if \(v\) is a vertex of \(A_1\), and \(p(v) \equiv
  (k-1)d_{H_2}(v)\pmod{k}\)  if \(v\) is a vertex of \(A_2\).
  Thus, the following holds, where the congruences are taken modulo~$k$.
	\begin{align*}
	\sum_{v \in V(G)} p(v) 
		& \quad = \quad \sum_{v\in A_1} p(v) +  \sum_{v\in A_2} p(v)\\
		& \quad \equiv  \quad (k-1)(|E(H_1)| + |E(H_2)|)\\
		& \quad \equiv  \quad (k-1)(|E| - |E(H_3)|)\\
		& \quad \equiv  \quad { k\,(|E|-|E(H_3)|) -|E| + |E(H_3)| } \\
		& \quad \equiv \quad |E(H_3)|.
	\end{align*}
	
	Since \(H_3\) is a \((3k-3)\)-edge-connected {spanning
        subgraph of~$G$,} by Theorem~\ref{thm:Thomassen12} there is an
        orientation \(O\) of \(H_3\) such
        that \(d^+_O(v)\equiv p(v) \pmod{k}\) for every $v\in V(H_3)=V(G)$.
	{For $i=1,2$, let $G_i$ be the graph \(H_i\) together with the
        edges of \(H_3\) that leave \(A_i\) in the orientation $O$
        (note that, $E=E(G_1)\cup E(G_2)$).  Thus, \(d_{G_i}(v) =
        d_{H_i}(v) + d^+_{O}(v) \equiv k\, d_{H_i}(v) \equiv 0 \pmod{k}\)
        for every vertex \(v\) in \(A_i\), and moreover, $G_i$ is
        \(r\)-edge-connected (because it contains $H_i$).}

\end{proof}

We note that in
  Lemma~\ref{lem:tight-good-initial-decomposition} we have $k$ odd and
  the $(6k-6+4r)$-edge-connectivity of $G$ is a consequence of the
  $(3k-3)$-edge-connectivity in the statement of
  Theorem~\ref{thm:Thomassen12}.  When $k$ is even, we can also prove
  an analogous result, changing the edge-connectivity of $G$ to
  $6k-4+4r$. For that, we only have to use a slightly weaker form of
  Theorem~\ref{thm:Thomassen12} for $k$ even, according to which, as 
  stated in~\cite{LoThWuZh13}, one may change the bound
  $(3k-3)$ to $(3k-2)$.

Given a graph \(G\) and a natural number \(r\),
an \emph{\(r\)-factor} in \(G\) is an \(r\)-regular spanning subgraph of \(G\). 
The following two results on $r$-factors in regular multigraphs
  will be used later.

\begin{theorem}[Von Baebler~\cite{Ba37} (see also~{\cite[Theorem 2.37]{AkKa11}})]\label{thm:Kano}
Let $r \geq 2$ be a natural number, and $G$ be an $(r-1)$-edge-connected 
$r$-regular multigraph of even order. Then $G$ has a $1$-factor. 
\end{theorem}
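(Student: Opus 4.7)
The plan is to use Tutte's classical $1$-factor theorem, which states that a multigraph $G$ has a $1$-factor if and only if $o(G - S) \leq |S|$ for every $S \subseteq V(G)$, where $o(H)$ denotes the number of odd components of $H$ (components of odd order). I would argue by contradiction: assume $G$ has no $1$-factor, pick a set $S \subseteq V(G)$ violating Tutte's condition, that is, with $o(G-S) > |S|$, and derive a contradiction from the edge-connectivity and regularity hypotheses.

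First I would observe the following parity estimate, which is the heart of the argument. Let $C$ be an odd component of $G - S$, and let $e(C,S)$ be the number of edges of $G$ with one endpoint in $C$ and the other in $S$. Since $C$ is a component of $G - S$, every edge leaving $C$ in $G$ goes to $S$; by the $(r-1)$-edge-connectivity of $G$ we have $e(C,S) \geq r-1$. On the other hand, counting degrees,
\begin{equation*}
    r\,|C| \;=\; \sum_{v \in C} d_G(v) \;=\; 2\,|E(G[C])| + e(C,S),
\end{equation*}
so $e(C,S) \equiv r|C| \equiv r \pmod{2}$ because $|C|$ is odd. Combining the lower bound $e(C,S) \geq r-1$ with the parity constraint $e(C,S) \equiv r \pmod{2}$ immediately forces $e(C,S) \geq r$ (the values $r-1$ and $r$ have opposite parities, so the bound is raised by one).

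Now I would sum this inequality over all odd components of $G-S$ and bound the result against the degree sum in $S$. Each edge incident with $S$ is counted at most once in $\sum_{C \text{ odd}} e(C,S)$, so
\begin{equation*}
    r\,o(G-S) \;\leq\; \sum_{C \text{ odd}} e(C,S) \;\leq\; \sum_{v \in S} d_G(v) \;=\; r\,|S|,
\end{equation*}
giving $o(G-S) \leq |S|$, contrary to the choice of $S$. Therefore Tutte's condition holds and $G$ has a $1$-factor.

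The only mildly delicate point I anticipate is making sure that Tutte's $1$-factor theorem is invoked in its multigraph form, and that the parity bump from $r-1$ to $r$ is justified uniformly for both parities of $r$ (which it is, since the argument only uses that $r-1$ and $r$ differ in parity). The evenness of $|V(G)|$ is used only implicitly through Tutte's theorem itself, ensuring that a $1$-factor is a well-posed object; once the parity-plus-connectivity estimate $e(C,S) \geq r$ is in hand, the rest is a one-line double counting.
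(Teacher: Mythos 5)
The paper does not prove this statement; it is quoted as a known result from Von Baebler (1937) and the Akiyama--Kano monograph, so there is no in-paper proof to compare against. Your argument is the standard (and correct) proof of Von Baebler's theorem via Tutte's $1$-factor theorem, and the parity bump from $e(C,S)\geq r-1$ to $e(C,S)\geq r$ followed by the double count against $\sum_{v\in S}d_G(v)=r|S|$ is exactly right.

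One detail worth making explicit: the edge-connectivity bound $e(C,S)\geq r-1$ applies only to a \emph{nontrivial} cut, i.e.\ it needs $\emptyset \neq C \subsetneq V(G)$. This could fail only if $S=\emptyset$ and $C=V(G)$; but since $r-1\geq 1$ the multigraph $G$ is connected, and since $|V(G)|$ is even its single component is even, so $o(G-\emptyset)=0=|\emptyset|$ and $S=\emptyset$ never violates Tutte's condition. This is in fact where the evenness hypothesis actively enters your proof --- it disposes of the one value of $S$ that your double-counting cannot reach --- rather than being used ``only implicitly through Tutte's theorem itself,'' as you suggest in your closing remark. With that small clarification the proof is complete.
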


\begin{theorem}[Petersen~\cite{Pe1891}]\label{thm:Petersen}
	If \(G\) is a \(2k\)-regular multigraph,
	then \(G\) admits a decomposition into \(2\)-factors.
\end{theorem}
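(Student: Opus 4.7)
The plan is to use the classical Eulerian-circuit argument combined with König's edge-coloring theorem for bipartite multigraphs. First, I would reduce to the case in which \(G\) is connected by decomposing each connected component separately and taking the union of the resulting \(2\)-factors (each extended by isolated vertices on the other components). Since \(G\) is now connected and every vertex has even degree \(2k\), the multigraph \(G\) admits an Eulerian circuit \(C\). Orienting each edge of \(G\) in the direction in which \(C\) traverses it yields a directed multigraph \(\vec G\) such that \(d^+_{\vec G}(v)=d^-_{\vec G}(v)=k\) for every vertex \(v\).

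Next, I would build an auxiliary bipartite multigraph \(H=(A\cup B,E_H)\) by splitting each vertex \(v\) of \(G\) into an out-copy \(v^+\in A\) and an in-copy \(v^-\in B\), and replacing each arc \((u,v)\) of \(\vec G\) by the edge \(u^+v^-\) in \(H\). By the degree equality above, \(H\) is \(k\)-regular bipartite. By König's edge-coloring theorem (applied to bipartite multigraphs), \(H\) admits a proper \(k\)-edge-coloring, which decomposes \(E(H)\) into \(k\) perfect matchings \(M_1,\ldots,M_k\).

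Translating each matching \(M_i\) back to \(G\) via the natural bijection between \(E(H)\) and the arcs of \(\vec G\) yields a subset \(F_i\subseteq E(G)\) in which every vertex of \(G\) is incident (in \(\vec G\)) to exactly one out-arc and exactly one in-arc of \(F_i\). Hence \(F_i\) is a \(2\)-regular spanning subgraph of \(G\), i.e., a \(2\)-factor. Since \(\{M_1,\ldots,M_k\}\) partitions \(E(H)\), the set \(\{F_1,\ldots,F_k\}\) partitions \(E(G)\), giving the desired decomposition. There is no substantive obstacle here; the only points to check are that the classical tools invoked (existence of an Eulerian circuit, König's edge-coloring theorem) carry over verbatim to multigraphs, and that the bijection between arcs of \(\vec G\) and edges of \(H\) indeed transports ``perfect matching'' to ``\(2\)-regular spanning subgraph'', which is immediate from the construction.
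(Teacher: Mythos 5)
The paper states Petersen's $2$-factor theorem as a cited black box (Theorem~\ref{thm:Petersen}) and gives no proof, so there is no in-paper argument to compare against. Your proof is the standard one and is correct: an Eulerian orientation makes every vertex a tail of $k$ arcs and a head of $k$ arcs, the bipartite split multigraph $H$ is then $k$-regular, K\H{o}nig's edge-colouring theorem decomposes $H$ into $k$ perfect matchings, and each matching pulls back to a spanning subgraph of $G$ in which every vertex meets exactly one out-arc and exactly one in-arc, hence has degree $2$ (these are distinct edges because the multigraph is loopless). The only slip is in the reduction to the connected case: a $2$-factor is a $2$-regular \emph{spanning} subgraph, so padding a component's $2$-factor with isolated vertices from the other components does not produce a $2$-factor of $G$. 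Instead, decompose each component into $k$ two-factors of that component and, for each $i\in\{1,\dots,k\}$, take the union over all components of the $i$-th two-factor; this is a routine adjustment and the rest of your argument stands.
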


\section{Fractional factorizations and canonical decompositions}\label{sec:canonical-decomposition}

In this section we prove that every $4$-edge-connected bipartite graph
$G=(A\cup B,E)$ such that the degree of each vertex in $A$ is divisible
by~$5$ admits a special decomposition, which we call ``fractional
factorization'' (see Subsection~\ref{sub:frac}).  Moreover, if \(G\)
is \(6\)-edge-connected, then such a factorization guarantees
  that we can construct a decomposition of \(G\) into trails of length
  \(5\) with some special properties (see Subsection~\ref{sub:can}).

\subsection{Fractional factorizations}~\label{sub:frac}

To simplify notation, if $F$ is a set of edges of a graph~$G$,
we write $d_F(v)$ to denote the degree of $v$ in $G[F]$,
the subgraph of \(G\) induced by \(F\).
If \(F\) is a set of edges of a directed graph \(\vec G\), we write \(d^+_F(v)\) 
(resp. \(d^-_F(v)\))
to denote the outdegree (resp. indegree) of \(v\) in~\(\vec{G}[F]\). 

\begin{definition}\label{def:special-decomposition}
	Let $\vec G$ be a bipartite directed graph with vertex classes \(A\) and \(B\),
	and  such that {the degree
	of each vertex in $A$ is divisible by~$5$.} We say that $\vec G$
	admits a \emph{fractional factorization} $(M,F,H)$ for \(A\) if $A(\vec G)$
	can be decomposed into three edge-sets $M$, $F$ and $H$ such
	that the following holds.
\begin{enumerate}[label=(\roman*)]
 \item Every edge in $M$ is directed from $B$ to $A$;
 \item For every $v\in A$, we have $d^-_{F}(v)=d^+_{F}(v)=d^-_{H}(v)=d^+_{H}(v)=d^-_{M}(v)=d(v)/5$;
 \item For every $v\in B$, we have $d^-_{F}(v)=d^+_{F}(v)$ and $d^-_{H}(v)=d^+_{H}(v)$.
\end{enumerate}
\end{definition}

\smallskip

\begin{lemma}\label{lemma:special-decomposition}
{Let $G=(A\cup B,E)$ be a $4$-edge-connected bipartite graph such
    that the degree of each vertex in $A$ is divisible by~$5$.}  Then,
    $G$ is the underlying graph of a directed graph $\vec G$ that
    admits a fractional factorization $(M,F,H)$ for \(A\).
\end{lemma}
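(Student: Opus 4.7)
The plan is to first apply Lemma~\ref{lemma:splitting-of-vertices} to reduce to the case where every vertex in $A$ has degree exactly~$5$, and then build the fractional factorization on the detached graph using a parity argument to choose~$M$ followed by Petersen's theorem to split the complement.

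Using Lemma~\ref{lemma:splitting-of-vertices} with $k=2$ and, for each $v\in A$, the subdegree sequence $S_v$ consisting of $d(v)/5$ copies of~$5$ (valid since $5\ge 2k$), we obtain a $4$-edge-connected bipartite detachment $G'=(A'\cup B, E')$ with $d_{G'}(a)=5$ for every $a\in A'$. A fractional factorization of $G'$ transfers back to $G$ by identifying the copies of each $v\in A$: the degree conditions at $a\in A'$ sum correctly over copies, while the conditions at $v\in B$ are unchanged. So it suffices to prove the lemma for $G'$.

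The main step (and the main obstacle) is to find $M\subseteq E(G')$ with $d_M(a)=1$ for every $a\in A'$ and $d_M(v)\equiv d_{G'}(v)\pmod 2$ for every $v\in B$; this parity condition ensures that $F_0:=G'-M$ has only even degrees. Start from any $M_0$ with $d_{M_0}(a)=1$. A ``swap'' at some $a$, replacing the chosen edge $ab\in M$ by another edge $ab'$ with $b'\in N_{G'}(a)\setminus\{b\}$, flips the parity of $d_M$ exactly at $b$ and $b'$. Consider the auxiliary graph $H_{\text{aux}}$ on $B$ where $bb'$ is an edge iff $b,b'$ share a common neighbor in $A'$; connectivity of $G'$ (which follows from $4$-edge-connectivity) implies the connectivity of $H_{\text{aux}}$, and a parity count ($|M_0|=|A'|$ and $|E(G')|=5|A'|$ have the same parity) shows that the set $T_0$ of parity-violating vertices has even size. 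Hence $H_{\text{aux}}$ contains a $T_0$-join, which can be realized as a sequence of swaps transforming $M_0$ into the desired $M$.

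With $M$ in hand, let $F_0=G'-M$; by construction every vertex has even degree in $F_0$. At each $v\in B$, pair the $d_{F_0}(v)$ incident edges into $d_{F_0}(v)/2$ pairs arbitrarily, and replace each pair $\{va_i,va_j\}$ by an edge $a_ia_j$ in a new multigraph $G''$ on $A'$ (no loops arise, since $G'$ is simple). Then $G''$ is a $4$-regular multigraph, so Petersen's theorem (Theorem~\ref{thm:Petersen}) decomposes it into two $2$-factors; pulling back yields a decomposition $F_0=F\cup H$ with $d_F(a)=d_H(a)=2$ for $a\in A'$ and $d_F(v),d_H(v)$ even for $v\in B$. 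Finally, orient every edge of $M$ from $B$ to $A'$, and take Eulerian orientations of $F$ and $H$ (which exist since all degrees are even in each component). One checks that the resulting $(M,F,H)$ satisfies all conditions of Definition~\ref{def:special-decomposition}, giving the required fractional factorization of $G'$ and, via the detachment, of $G$.
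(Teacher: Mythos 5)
Your overall strategy diverges from the paper's, and the divergence introduces a genuine gap. The paper also begins by detaching~$A$, but then detaches~$B$ as well, applies Mader's Lifting Theorem to lift away the even-degree vertices of~$B$, suppresses the degree-$2$ remnants, and arrives at a $4$-edge-connected $5$-regular multigraph~$G^*$; it then invokes Von Baebler's theorem (Theorem~\ref{thm:Kano}) to extract a perfect matching $M^*$ and Petersen's theorem for the two $2$-factors. The $4$-edge-connectivity is used essentially twice: once to keep the lifted graph $4$-edge-connected, and once as the hypothesis of Von Baebler. You instead detach only~$A$ and try to manufacture~$M$ directly by a parity/$T$-join argument, using $4$-edge-connectivity only to ensure that $G'$ is connected. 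That is where the argument breaks.

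The problematic step is the claim that a $T_0$-join in $H_{\mathrm{aux}}$ ``can be realized as a sequence of swaps.'' A swap at $a$ can only flip the parity at the pair $\{b,b'\}$ where $b$ is the \emph{current} $M$-neighbour of $a$; it cannot flip an arbitrary pair of neighbours of~$a$. Repeated swaps at the same~$a$ compose to a single swap, so the set of achievable parity changes is not the full $\mathbb{F}_2$-span of $\{e_b+e_{b'}:b,b'\in N_{G'}(a),\ a\in A'\}$ — it is only the Minkowski sum $\sum_{a}\{e_b+e_{b_0(a)}:b\in N_{G'}(a)\}$, which in general is a strict subset. A concrete illustration: if $G'$ is the star $K_{1,5}$ with center $a\in A'$ and leaves $b_1,\dots,b_5\in B$, then $H_{\mathrm{aux}}=K_5$ is connected and the parity count gives $|T_0|=4$ even (taking $M_0=\{ab_1\}$, so $T_0=\{b_2,b_3,b_4,b_5\}$), yet the only achievable parity flips are $\{b_1,b_j\}$, so no valid $M$ exists. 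Of course this graph is not $4$-edge-connected, but your argument never invokes $4$-edge-connectivity after the detachment step, so it cannot rule such configurations out; and the example shows that connectivity of $H_{\mathrm{aux}}$ plus $|T_0|$ even is genuinely insufficient. To make your route work you would need to \emph{use} the higher connectivity to prove that the desired $M$ exists — which is precisely what the paper's lifting-plus-Von-Baebler argument accomplishes. The remainder of your construction (pairing the $F_0$-edges at each $v\in B$ to form a $4$-regular multigraph on $A'$, applying Petersen's theorem, taking Eulerian orientations, and transferring back through the detachment) is sound, conditional on obtaining such an~$M$.
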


\begin{proof}
{Let \(G=(A\cup B,E)\) be as stated in the hypothesis of the lemma.}
First, we want to apply Lemma~\ref{lemma:splitting-of-vertices} to $G$
and obtain a \(4\)-edge-connected graph $G'$ with maximum degree~\(7\).
To do this, for every vertex $v \in B$, we take integers $s_v \geq 1$
and $0 \leq r_v < 4$ such that $d(v) = 4s_v + r_v$.  We put
$d^v_1 = 4+r_v$ and \(d^v_2 = \cdots = d^v_{s_v} = 4\).
Furthermore, for every vertex $v\in A$, we put $s_v=d(v)/(5)$ and
$d^v_i=5$ for $1\leq i\leq s_v$.  By
Lemma~\ref{lemma:splitting-of-vertices}, applied with parameters $k=2$
and the integers $s_v$, $d^v_i$ ($1\leq i\leq s_v$) for every
$v\in V(G)$, there exists a $4$-edge-connected bipartite graph $G'$
obtained from \(G\)
by splitting each vertex \(v\)
of \(A\)
into \(s_v\)
vertices of degree \(5\),
and each vertex \(v\)
of \(B\)
into a vertex of degree $4+r_v < 8$ and $s_v-1$ vertices of degree
$4$.  Let \(A'\)
and \(B'\)
be the set of vertices of \(G'\)
obtained from the vertices of \(A\)
and \(B\),
respectively.  For ease of notation, if
\(v \in (A' \cup B') \setminus (A \cup B)\)
we also denote by \(v\)
the original vertex in \((A \cup B)\) at which we applied splitting.

The next step is to obtain a $5$-regular multigraph $G^*$ from $G'$ by 
using lifting operations. For this, we 
will add some edges to $A'$ and remove the even-degree vertices of $B'$ by successive
applications of Mader's Lifting Theorem as 
follows. 

Let \(G'_0,G'_1,\ldots, G'_\lambda\) be a maximal sequence of graphs such that \(G'_0 = G'\)
and (for \(i\geq 0\)) \(G'_{i+1}\) is the graph obtained from \(G'_i\) 
by the application of an admissible lifting
at an arbitrary vertex \(v\) of degree in \(\{4,6,7\}\).

Recall that given any two vertices of \(G'\), say \(x\) and \(y\), we denote by \(p_{G'}(x,y)\)
the maximum number of pairwise edge-disjoint paths joining \(x\) and \(y\) in \({G'}\).
We claim that \(p_{G'_i}(x,y) \geq 4\) for any \(x,y\) in \(A'\) and every \(i\geq 0\).
Clearly, \(p_{G'_0}(x,y) \geq 4\) holds for any \(x,y\) in \(A'\), since \(G'\) is \(4\)-edge-connected.
Fix \(i\geq 0\) and suppose \(p_{G'_i}(x,y) \geq 4\) holds for any \(x,y\) in \(A'\).
Let \(x,y\) be two vertices in \(A'\).
Since \(G'_{i+1}\) is a graph obtained from \(G'_i\) by the application of an admissible lifting 
at a vertex \(v\) in \(B'\), we have \(p_{G'_{i+1}}(x,y) \geq p_{G'_i}(x,y) \geq 4\).


We claim that if \(v\) is a vertex in \(B'\),
then \(d_{G'_\lambda}(v) \in \{2,5\}\).
Suppose by contradiction that there is a vertex \(v\) in \(B'\) such that
\(d_{G'_\lambda}(v) \notin \{2,5\}\).
Note that \(d_{G'_i}(u) \geq d_{G'_{i+1}}(u)\geq 2\) for every vertex \(u\) of \(G\)
and every \(0\leq i \leq \lambda\).
Since \(d_{G'}(u) \leq 7\) for every vertex \(u\) in \(V'\),
we have \(2 \leq d_{G'_i}(u) \leq 7\) for every \(0\leq i \leq \lambda\).
Therefore, \(d_{G'_i}(v) \in \{4,6,7\}\).
Since \(d_{G'_\lambda}(v) \leq 7\) and for any two neighbors \(x,y\) of \(v\) we have 
\(p_{G'_\lambda}(x,y) \geq 4\),
Lemma~\ref{lemma:non-cut-vertex} implies that \(v\) is not a cut-vertex of \(G'_\lambda\).
Then, by Mader's Lifting Theorem (Theorem~\ref{thm:Mader}) in \(G'_\lambda\),
there is an admissible lifting at \(v\).
Therefore, \(G'_0,G'_1,\ldots, G'_\lambda\) is not maximal, a contradiction.

In \(G_\lambda'\)
we may have some vertices in \(B'\)
that have degree $2$.  For every such vertex $v$, if $u$ and $w$ are
the neighbors of $v$, we apply a \(uw\)-lifting
at $v$, and remove the vertex $v$, i.e., we perform a supression of
\(v\).
Let \(G^*\)
be the graph obtained by this process.  Note that the number of
pairwise edge-disjoint paths joining two distinct vertices of \(A'\)
remains the same, and thus,
  \(p_{G^*}(x,y) = p_{G'_\lambda}(x,y)\geq 4\)
for every \(x,y\)
in \(A'\).
Furthermore, the set of vertices of $G^*$ that belong to $B'$ is an
independent set; we denote it by $B^*$ (eventually,
\(B^* = \emptyset\)).
Note that, if \(B^*\)
is nonempty, every vertex of \(B^*\)
has degree \(5\).

\begin{claim}
\(G^*\) is \(4\)-edge-connected.
\end{claim}
\begin{proof}
Let \(Y \subseteq V(G^*)\).
Suppose there is at least one vertex \(x\) of \(A'\) in \(Y\)
and at least one vertex \(y\) of \(A'\) in \(V(G^*)-Y\). 
Since there are at least \(4\) edge-disjoint paths joining \(x\) to \(y\),
there are at least \(4\) edges, each one with vertices in both \(Y\) and 
\(V(G^*)-Y\).
Now, suppose that \(A'\subset Y\) (otherwise \(A'\subset V(G^*)-Y\) and
we take \(V(G^*)-Y\) instead of \(Y\)), and then \(V(G^*)-Y \subseteq B^*\).
Since \(B^*\) is an independent set, all edges with a vertex in \(V(G^*)-Y\) 
must have the other vertex in \(A'\).
Since every vertex in \(B^*\) has degree \(5\), 
there are at least \(5\) edges, each one with vertices in both \(Y\) and \(V(G^*)-Y\). 
\end{proof}

We conclude that $G^*$ is a $4$-edge-connected $5$-regular multigraph
with vertex-set $A'\cup B^*$, where $B^*$ is an independent set.

Now we work on the multigraph $G^*$.  Since \(G^*\) is $5$-regular,
\(G^*\) has even order.  Thus, by Theorem~\ref{thm:Kano}, \(G^*\)
contains a perfect matching \(M^*\).  The multigraph \(J^* = G^* -
M^*\) is a \(4\)-regular multigraph.  By Theorem~\ref{thm:Petersen},
\(J^*\) admits a decomposition into \(2\)-factors with edge-sets, say
\(F^*\) and \(H^*\).  Thus, \(M^*\), \(F^*\), and \(H^*\) define a
partition of \(E(G^*)\).

Now let us go back to the bipartite graph $G$. 
Let \(xy\) be an edge of \(G^*\).
If \(xy\) joins a vertex of \(A'\) to a vertex of \(B^*\),
then \(xy\) corresponds to an edge of \(G\).
If \(xy\) joins two vertices of \(A'\),
then there is a vertex \(v_{xy}\) of \(B'\) and two edges of \(G'\) incident to 
it, 
\(xv_{xy}\) and \(v_{xy}y\),
such that \(xy\) was obtained by an \(xy\)-lifting 
at \(v_{xy}\) (either by an application of Mader's Lifting 
Theorem or by the supression of vertices of degree $2$).
Thus, each edge of \(G^*\) represents an edge of \(G\) or a \(2\)-path
in \(G\) such that the internal vertices of these \(2\)-paths are 
always 
in \(B\).
For every edge $xy\in E(G^*)$, define \(f(xy) = \{xy\}\) if \(xy\) joins a 
vertex of 
\(A'\) to a vertex of \(B^*\),
and \(f(xy) = \{xv_{xy},v_{xy}y\}\) if \(xy\) joins two vertices 
of \(A'\). Note that, for every edge $xy$ of $G^*$, we have $f(xy)\subset E(G)$.
For a set \(S\) of edges of \(G^*\), put \(f(S) = \cup_{e\in S} f(e)\).
The partition of \(E(G^*)\) into \(M^*\), \(F^*\) and \(H^*\) induces a 
partition of \(E(G)\)
into \(M=f(M^*)\), \(F = f(F^*)\) and \(H = f(H^*)\).

Now we construct an Eulerian orientation of \(G[F]\) and \(G[H]\)
induced by any Eulerian orientation of \(G^*[F^*]\) and \(G^*[H^*]\).
Let \(xy\) be an edge of \(G^*-M^*\) oriented from \(x\) to \(y\). 
If \(xy\) joins a vertex of \(A'\) to a vertex of \(B'\), let \(xy\) be oriented from \(x\) to \(y\)
in \(G-M\).
Otherwise, recall that \(f(xy) = \{xv_{xy},v_{xy}y\}\), 
and let \(xv_{xy}\) be oriented from \(x\) to \(v_{xy}\) in \(G-M\), and \(v_{xy}y\) be oriented
from \(v_{xy}\) to \(y\) in \(G-M\).
The obtained orientation of $G-M$ is Eulerian. Finally, orient 
all edges of $M$ from \(B\) to \(A\).
Let \(\vec G\) be the directed graph obtained by such an orientation of \(G\).

Let us prove that \((M,F,H)\) is a fractional factorization of \(\vec G\) for \(A\).
Let \(v\) be a vertex of \(A\) in \(G\) of degree \(5d'(v)\).
The vertex \(v\) is represented by \(d'(v)\) vertices in \(G^*\).
Since \(M^*\) is a perfect matching in \(G^*\), there are \(d'(v)\) edges
of \(M\) entering \(v\).
Since \(G^*[F^*]\) (resp. \(G^*[H^*]\)) is a \(2\)-factor in \(G^*\),
there are \(d'(v)\) edges of \(F\) (resp. \(H\)) entering \(v\)
and \(d'(v)\) edges of \(F\) (resp. \(H\)) leaving \(v\).
Since \(G^*[F^*]\) (resp. \(G^*[H^*]\)) is a \(2\)-factor in \(G^*\),
we have \(d_F^+(v) = d_F^-(v) = d_H^+(v) = d_H^-(v)\), concluding the proof.

\end{proof}

\subsection{Canonical decompositions}~\label{sub:can}

In this subsection we show that if a $6$-edge-connected bipartite directed graph admits
  a fractional factorization, then it admits a very special trail
  decomposition. We make precise what are the properties of such a
  special trail decompositon.

  Let $\vec G$ be a directed graph such that $A(\vec G)$ is {the union
    of pairwise} disjoint sets of directed edges $M$, $F$ and $H$.
  The following definitions refer to the triple \(\mathcal{F} =
  (M,F,H)\).  Let $T=abcde$ be a trail of length~$4$ in~$\vec G$,
  where $ab\in M$, $bc,cd\in F$ and $de\in H$.  We say that $T$ is an
  \emph{\(\mathcal{F}\)-basic path} if \(T\) is a path; and \(T\)
  is an \emph{\(\mathcal{F}\)-basic cycle} if \(T\) is a cycle (see
  Figure~\ref{fig:basics}).  Furthermore, let \(T = abcdef\) be a
  trail in $\vec G$ such that \(abcde\) is an \(\mathcal{F}\)-basic
  path.  We say that \(T\) is an \emph{\(\mathcal{F}\)-canonical path}
  if \(T\) is a path; and an \emph{\(\mathcal{F}\)-canonical trail},
  otherwise (see Figure~\ref{fig:goods}).  We say that a decomposition
  \(\mathcal{D}\) of \(\vec G\) is an \emph{\(\mathcal{F}\)-basic
    decomposition} if each element of \(\mathcal{D}\) is an
  \(\mathcal{F}\)-basic path or an \(\mathcal{F}\)-basic cycle.
  Analogously, \(\mathcal{D}\) is an \emph{\(\mathcal{F}\)-canonical
    decomposition} if each element of \(\mathcal{D}\) is an
  \(\mathcal{F}\)-canonical path or an \(\mathcal{F}\)-canonical
  trail. 


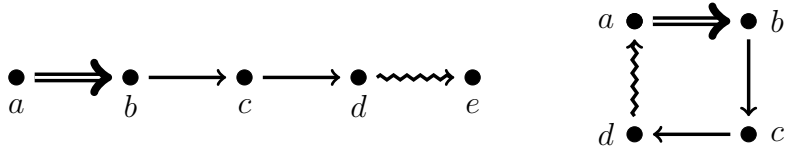
\begin{figure}[h]
\centering
	\begin{tikzpicture}[scale = 1.5]

	
	\node (-1) [	]	 at (0,-.6){};
	\node (0) [black vertex] at (-2,0) {};
	\node (1) [black vertex] at (-1,0) {};
	\node (2) [black vertex] at (0,0) {};
	\node (3) [black vertex] at (1,0) {};
	\node (4) [black vertex] at (2,0) {};

	\node (a) [] at ($(0) - (0,0.25)$) {$a$};
	\node (b) [] at ($(1) - (0,0.25)$) {$b$};
	\node (c) [] at ($(2) - (0,0.25)$) {$c$};
	\node (d) [] at ($(3) - (0,0.25)$) {$d$};
	\node (e) [] at ($(4) - (0,0.25)$) {$e$};
	
	\draw[M edge] (0) -- (1);
	\draw[thick][F1 edge] (1) -- (2);
	\draw[F1 edge] (2) -- (3);
	
	\draw[F2 edge] (3) -- (4);

\end{tikzpicture}\hspace{1cm}
	\begin{tikzpicture}[scale = 1.5]

	
	\node (0) [black vertex] at (-.5,.5) {};
	\node (1) [black vertex] at (.5,.5) {};
	\node (2) [black vertex] at (.5,-.5) {};
	\node (3) [black vertex] at (-.5,-.5) {};
	\node (4) [black vertex] at (0) {};

	\node (a) [] at ($(0) - (0.25,0)$) {$a$};
	\node (b) [] at ($(1) + (0.25,0)$) {$b$};
	\node (c) [] at ($(2) + (0.25,0)$) {$c$};
	\node (d) [] at ($(3) - (0.25,0)$) {$d$};
	
	\draw[M edge] (0) -- (1);
	\draw[F1 edge] (1) -- (2);
	\draw[F1 edge] (2) -- (3);
	\draw[F2 edge] (3) -- (4);

\end{tikzpicture}
	\caption{ An \(\mathcal{F}\)-basic path and an  \(\mathcal{F}\)-basic cycle.}
	\label{fig:basics}
\end{figure}

\begin{figure}[h]
\centering
	\begin{tikzpicture}[scale = 1.5]


	\node (-1) [	]	 at (0,-.6){};
	\node (0) [black vertex] at (-1,0) {};
	\node (1) [black vertex] at (0,0) {};
	\node (2) [black vertex] at (1,0) {};
	\node (3) [black vertex] at (2,0) {};
	\node (4) [black vertex] at (3,0) {};
	\node (5) [black vertex] at (4,0) {};
	
	\node (a) [] at ($(0) - (0,0.25)$) {$a$};
	\node (b) [] at ($(1) - (0,0.25)$) {$b$};
	\node (c) [] at ($(2) - (0,0.25)$) {$c$};
	\node (d) [] at ($(3) - (0,0.25)$) {$d$};
	\node (e) [] at ($(4) - (0,0.25)$) {$e$};
	\node (f) [] at ($(5) - (0,0.25)$) {$f$};

	\draw[M edge] (0) -- (1);
	\draw[F1 edge] (1) -- (2);
	\draw[F1 edge] (2) -- (3);
	\draw[F2 edge] (3) -- (4);
	\draw[F2 edge] (4) -- (5);

\end{tikzpicture}\hspace{1cm}
	\begin{tikzpicture}[scale = 1.5]


	\node (0) [black vertex] at (-1,1) {};
	\node (1) [black vertex] at (0,1) {};
	\node (2) [black vertex] at (1,1) {};
	\node (3) [black vertex] at (1,0) {};
	\node (4) [black vertex] at (0,0) {};

	\node (a) [] at ($(0) - (0,-0.25)$) {$a$};
	\node (b) [] at ($(1) - (0,-0.25)$) {$b$};
	\node (c) [] at ($(2) - (0,-0.25)$) {$c$};
	\node (d) [] at ($(3) - (0,0.25)$) {$d$};
	\node (e) [] at ($(4) - (0,0.25)$) {$e$};

	\draw[M edge] (0) -- (1);
	\draw[F1 edge] (1) -- (2);
	\draw[F1 edge] (2) -- (3);
	\draw[F2 edge] (3) -- (4);
	\draw[F2 edge] (4) -- (1);

\end{tikzpicture}
	\caption{An \(\mathcal{F}\)-canonical path and an \(\mathcal{F}\)-canonical trail.}
	\label{fig:goods}
\end{figure}
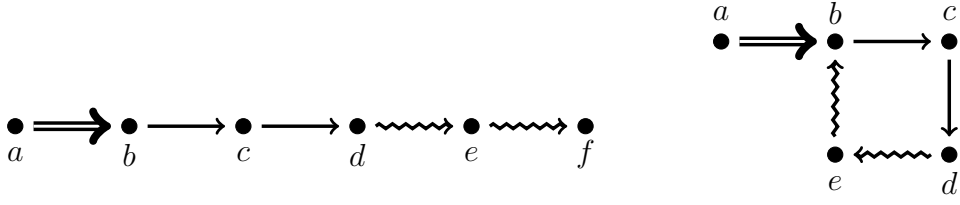

To prove the next lemma, we use some ideas inspired by the techniques in~\cite{Th08b}.

\begin{lemma}\label{lemma:decomposition-Tout}
Let $\vec G$ be a $6$-edge-connected bipartite directed graph. 
If $\vec G$ admits a fractional factorization {$\mathcal{F}$} for \(A\), then 
$\vec G$ admits an \(\mathcal{F}\)-canonical decomposition.
\end{lemma}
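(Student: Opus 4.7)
The plan is to glue together $M$-edges, $F$-$2$-paths, and $H$-$2$-paths at the vertices of $A$, exploiting the Eulerian structure of $\vec G[F]$ and $\vec G[H]$ at vertices of $B$ and the $5$-divisibility at vertices of $A$.

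First, since $d^+_F(c)=d^-_F(c)$ for every $c\in B$, I would pair incoming and outgoing $F$-edges arbitrarily at each such $c$, obtaining a collection $\mathcal{P}_F$ of directed $2$-paths $bcd$ with $b,d\in A$ and $c\in B$; analogously I would build a collection $\mathcal{P}_H$ of $2$-paths $def$ with $d,f\in A$ and $e\in B$ from $\vec G[H]$. At every $v\in A$ the five quantities $d^-_M(v)$, $d^+_F(v)$, $d^-_F(v)$, $d^+_H(v)$, $d^-_H(v)$ are all equal to $d(v)/5$, which lets me fix two bijections at $v$: one between the $M$-edges entering $v$ and the $\mathcal{P}_F$-paths starting at $v$, and one between the $\mathcal{P}_F$-paths ending at $v$ and the $\mathcal{P}_H$-paths starting at $v$. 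Following these bijections out of each $M$-edge $ab$ concatenates the corresponding pieces into a $5$-edge directed trail $abcdef$ with $ab\in M$, $bc,cd\in F$ and $de,ef\in H$, and the trails obtained from the different $M$-edges partition $E(\vec G)$.

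Since $\vec G$ is bipartite and simple, we have $a,c,e\in B$ and $b,d,f\in A$, and every possible vertex coincidence inside $abcde$ is ruled out by the bipartition or by the trail condition except possibly $a=e$. So a constructed trail fails to be an $\mathcal{F}$-canonical path or trail precisely when $a=e$, i.e.\ when $abcde$ is an $\mathcal{F}$-basic cycle. The whole task therefore reduces to choosing the various pairings above so that no resulting trail satisfies $a=e$.

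This avoidance is the main obstacle, and it is where the $6$-edge-connectivity hypothesis enters. From $d(v)\geq 6$ and $5\mid d(v)$ we obtain $d(v)/5\geq 2$ at every $v\in A$, so at every potential middle $A$-vertex $d$ there are at least two possible values for $e$. The local problem of pairing the $d(d)/5$ incoming $F$-$2$-paths at $d$ (each labelled by its source $a$) with the $d(d)/5$ outgoing $H$-edges at $d$ (each determining a middle vertex $e$) then becomes a bipartite matching problem in which each source forbids at most one target, giving a Hall-type slack that usually yields a valid matching. The delicate point is that Hall's condition can fail locally at $d$ when many chains arriving at $d$ share the same source $a$; I would handle this either by exploiting the still-available freedom in the Eulerian pairings at $B$-vertices to redistribute the sources at problematic $A$-vertices, or equivalently by performing an iterative edge-swap that eliminates one bad trail at a time, in both cases using the $6$-edge-connectivity to guarantee enough room to maneuver.
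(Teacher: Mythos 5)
Your setup is sound and matches the paper's structure: you correctly reduce the problem to building trails $abcdef$ with $ab\in M$, $bc,cd\in F$, $de,ef\in H$ by gluing $M$-edges, $F$-$2$-paths, and $H$-$2$-paths at $A$-vertices, and you correctly observe that bipartiteness, simplicity, and the disjointness of $M,F,H$ rule out every coincidence in $abcde$ except $a=e$, so that the only defect to avoid is the $\mathcal{F}$-basic-cycle case. You also correctly use $6$-edge-connectivity plus $5$-divisibility to get $d(v)\geq 10$, hence $d(v)/5\geq 2$, at every $v\in A$.

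The gap is in the cycle-avoidance step, which is exactly where the real work lies. Your Hall-type local matching at a vertex $d$ can indeed fail (as you note, when all incoming $F$-chains at $d$ share the same source $a$ and $da\in H$), and the proposed fixes are not arguments but placeholders: "redistribute the sources at problematic $A$-vertices" and "iterative edge-swap that eliminates one bad trail at a time" are both stated without showing that a single local swap does not create a new cycle elsewhere, or that the process terminates. That termination is nontrivial and is the heart of the paper's Lemma~\ref{lemma:decomposition-Tout}. The paper's proof works on the subgraph $G'=G[M\cup F\cup H^+(A)]$ (ignoring the fifth edge until the end), takes an $\mathcal{F}'$-basic decomposition minimizing the number of basic cycles, and, if a cycle $T_0$ exists, builds a maximal sequence $T_0,T_1,T_2,\ldots$ linked alternately at start- and end-vertices of the $F$-centers; it then finds a repeated linking vertex and performs a coordinated cascade of edge reassignments along a whole prefix $T_0,\ldots,T_{k^*}$ of the sequence, strictly reducing the cycle count. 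This chain argument — not a one-shot local swap — is what makes the reduction monotone and yields a contradiction to minimality. Your write-up identifies the obstacle correctly but does not supply this mechanism, so the proof is not complete as stated.
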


\begin{proof}
  Let $\vec G$ be a bipartite directed graph with vertex classes \(A\)
  and \(B\) that admits a fractional factorization \(\mathcal{F} =
  (M,F,H)\) for \(A\).  Let \(H^+(A)\) be the set of edges of \(H\)
  leaving vertices of \(A\), and let \(H^-(A)\) be the set of edges of
  \(H\) entering vertices of \(A\).
  Note that { \(\mathcal{F}' = (M,F,H^+(A))\) decomposes the edge-set}
  of \(G'=G[M \cup F \cup H^+(A)]\).

We start by proving that \(G'\) admits an \(\mathcal{F}'\)-basic path decomposition. 
For that, we first show that \(G'\) admits an \(\mathcal{F}'\)-basic decomposition 
and after we prove that there is an \(\mathcal{F}'\)-basic decomposition  without cycles.

By item (iii) of Definition~\ref{def:special-decomposition}, for every $v\in B$, we have 
$d^-_{F}(v)=d^+_{F}(v)$. 
Then, the subgraph of $G'$ induced by the edges of $F$ admits a 
$P_2$-decomposition such that the endpoints of the elements of the decomposition are in $A$. Let $\mathcal{D}_2$ be a 
$P_2$-decomposition of $G'[F]$. 
By item (ii) of Definition~\ref{def:special-decomposition}, for every 
$v\in A$, we have $d^-_{M}(v)=d^+_{F}(v)$ and $d^-_{F}(v)=d^+_{H}(v)$. 
Therefore, one can extend 
$\mathcal{D}_2$ to an \(\mathcal{F}'\)-basic decomposition of $G'$ by adding two edges to each element of 
$\mathcal{D}_2$. Precisely, for each path $xyz$ that is an element of $\mathcal{D}_2$, it is possible to 
extend it to either an \(\mathcal{F}'\)-basic path or an \(\mathcal{F}'\)-basic cycle 
by adding one edge of $M$ to $x$ and one edge of $H^+$ to $z$.

For each \(\mathcal{F}'\)-basic decomposition \(\mathcal{D}\) of \(G'\), 
let \(\rho (\mathcal{D})\)
be the number of \(\mathcal{F}'\)-basic cycles in $\mathcal{D}$. 
Let \(\mathcal{D}\) be an \(\mathcal{F}'\)-basic decomposition of \(G'\) that minimizes 
\(\rho (\mathcal{D})\) over all \(\mathcal{F}'\)-basic decompositions of \(G'\).
If \(\rho (\mathcal{D}) = 0\) then \(\mathcal{D}\) is an \(\mathcal{F}'\)-basic path decomposition of $G'$.
Thus, suppose \(\rho (\mathcal{D}) > 0\).

By definition, every element \(T\) of an \(\mathcal{F}'\)-basic decomposition 
contains exactly one 
directed path \(P\) of length two on the edges of \(F\) (see
Figure~\ref{fig:basics}), {which we call} the \emph{center} of~\(T\).
Moreover, suppose that \(P\) starts at a vertex \(x\) and ends at a
vertex \(y\).
We say that \(x\) and \(y\) are the {\emph{starting} and \emph{ending}} vertices
of \(T\), and we denote them \(\start(T)\) and \(\fim(T)\), respectively.
Note that \(x,y \in A\).

Since \(G\) is \(6\)-edge-connected
and every vertex in \(A\) has degree divisible by \(5\),
every vertex in $A$ has degree at least $10$. 
Then, since for every $v\in 
A$ we have $d^-_{F}(v)=d^+_{F}(v)=d^-_{H}(v)=d^+_{H}(v)=d^-_{M}(v)$, we conclude that every $v\in A$
contains at least two incoming edges of \(F\) and two outgoing
edges of \(F\). Therefore, given an element \(T_2\) of \(\mathcal{D}\), there exists an 
element \(T_1\) of \(\mathcal{D}\) such that 
\(\start(T_1) = \start(T_2)\) and there exists an element \(T_3\) of \(\mathcal{D}\),
such that \(\fim(T_3) = \fim(T_2)\) (note that possibly
$T_3=T_1$). Then, there is a maximal sequence  \(S = T_0,T_1,T_2,\cdots\) of elements of $\mathcal{D}$ such that \(T_0\) is an \(\mathcal{F}'\)-basic cycle and, for every $k\geq 
0$, we have \(\fim(T_{2k}) = \fim(T_{2k+1})\) and \(\start(T_{2k+1}) = \start(T_{2k+2})\) (see 
Figure~\ref{fig:sequence} for an example).

	\begin{figure}[h]
		\centering
		\begin{tikzpicture}[scale = 1.2]

	
	\node (0) [white vertex] at (1,1) {};
	\node (s0) [above] at ($(0) + (0,0.05)$) {\(s_0\)};
	\node (1) [black vertex] at (0,0) {};
	\node (t0) [below] at ($(1) - (0,0.05)$) {\(t_0\)};
	\node (2) [white vertex] at (1,-1) {};
	\node (3) [black vertex] at (2,0) {};
	\node (4) [white vertex] at (0) {};

	\node (T0) [color=green!75!blue] at ($(1) + (1,0.05)$) {\(T_0\)};
	
	\draw[color=green!75!blue][M edge] (0) -- (1);
	\draw[F1 edge][color=green!75!blue] (1) -- (2);
	\draw[F1 edge][color=green!75!blue] (2) -- (3);
	\draw[F2 edge][color=green!75!blue] (3) -- (4);
	
	\node (00) [white vertex] at (3,1) {};
	\node (s1) [above] at ($(00) + (0,0.05)$) {\(s_1\)};
	\node (11) [black vertex] at (4,0) {};
	\node (22) [white vertex] at (3,-1) {};
	\node (33) [black vertex] at (2,0) {};
	\node (t1) [below] at ($(33) - (0,0.05)$) {\(t_1\)};
	\node (44) [white vertex] at (00) {};

	\node (T1) [color=red] at ($(33) + (1,0.05)$) {\(T_1\)};

	\draw[M edge][color=red] (00) -- (11);
	\draw[F1 edge][color=red] (11) -- (22);
	\draw[F1 edge][color=red] (22) -- (33);
	\draw[F2 edge][color=red] (33) -- (44);
	
	\node (000) [white vertex] at (4,1) {};
	\node (111) [black vertex] at (4,0) {};
	\node (t2) [below] at ($(111) - (0,0.05)$) {\(t_2\)};
	\node (222) [white vertex] at (5,-1) {};
	\node (333) [black vertex] at (6,0) {};
	\node (t3) [below] at ($(333) - (0,0.05)$) {\(t_3\)};
	\node (444) [white vertex] at (6,1) {};
	\node (s2) [above] at ($(444) + (0,0.05)$) {\(s_2\)};

	\node (T2)[color=blue] at ($(111) + (1,0.05)$) {\(T_2\)};
	
	\draw[M edge][color=blue] (000) -- (111);
	\draw[F1 edge][color=blue] (111) -- (222);
	\draw[F1 edge][color=blue] (222) -- (333);
	\draw[F2 edge][color=blue] (333) -- (444);
	
	\node (x) [] at (7,0) {$\cdots$};

\end{tikzpicture}
		\caption{Example of a sequence \(T_0,T_1,T_2,\cdots\) such that \(T_0\) is an
		\hbox{\(\mathcal{F}'\)-basic cycle}  and, 
		for every $k\geq 0$, we have \(\fim(T_{2k}) = \fim(T_{2k+1})\) 
		and \(\start(T_{2k+1}) = \start(T_{2k+2})\).}
		\label{fig:sequence}
	\end{figure}
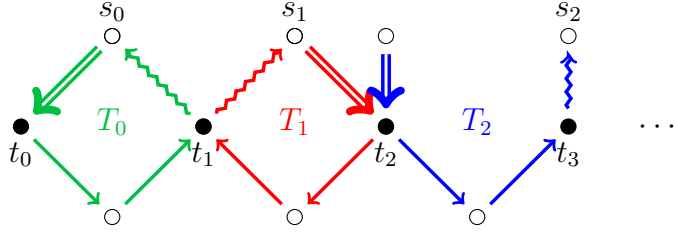
	
	Consider the sequence \(R=t_0,t_1,t_2,\cdots\) 
	of vertices of $A$ that belong to elements of~\(S\), i.e., 
	for every $k\geq 0$, we have \(t_{2k} = \start(T_{2k})\) and \(t_{2k+1} = \fim(T_{2k+1})\).
	Since \(G\) is finite, \(t_j = t_i\) for some \(0\leq i<j\). Therefore, there exists a ``cycle'' 
of elements of $\mathcal{D}$ in the sequence $S$.
Let \(i\) be the minimum integer such that \(t_i = t_j\) for some \(j > i\).
Note that if \(i\neq 0\), then \(T_{i-1} \neq T_{j-1}\).
	For each element \(T_k\) of $S$, let \(s_k\) be the vertex of $T_k$ such that either \(s_kt_{k+1} \in 
E(T_k)-F\) or \(t_{k+1}s_k \in E(T_k)-F\),
i.e, \(s_k\) is the vertex of \(T_k\) that is neighbor of \(t_{k+1}\) and is not incident to 
the edges in \(E(T_k)\cap F\).
We claim that \(s_k \neq s_0\) for some \(k > 0\). 
If \(i = 0\), then \(t_j = t_0\). Since \(T_0\) is an \(\mathcal{F}'\)-basic cycle, we have 
\(s_0t_0 \in E(T_0)-F\), from where we conclude that \(s_0t_j \notin E(T_{j-1})\), implying 
that \(s_{j-1}\neq s_0\).
Thus, suppose \(i> 0\).
Note that, since \(T_{i-1}\neq T_{j-1}\) and \(t_i = t_j\), 
we have \(s_i \neq s_j\).
Thus, at least one vertex in \(\{s_i,s_j\}\) is different from \(s_0\).

	Let \(k^*\) be the minimum integer such that \(s_{k^*} \neq s_0\). We want to disentangle the elements of 
$\mathcal{D}$ to obtain an \(\mathcal{F}'\)-basic decomposition with
fewer copies of  \(\mathcal{F}'\)-basic cycles than~$\mathcal{D}$. For that, 
consider the following notation for the elements of $\mathcal{D}$. For \(0 \leq \ell \leq k^*\), let \(T_{\ell} = 
a^\ell_{0}\,a^\ell_{1}\,a^\ell_{2}\,a^\ell_{3}\,a^\ell_{4}\) such that
$a^{\ell}_{0}a^\ell_{1}\in M$, \(a^\ell_{1}a^\ell_{2},\, 
a^\ell_{2}a^\ell_{3} \in F\) and \(a^\ell_{3}a^\ell_{4} \in H\). Thus, note that \(a^\ell_{1} = t_\ell\) and 
\(a^\ell_{3} = t_{\ell+1}\) if \(\ell\) is even, and that \(a^\ell_{1} = t_{\ell+1}\) and 
\(a^\ell_{3} = t_{\ell}\) if \(\ell\) is odd. Let
\begin{align*}
T'_0 &= a^0_{0}\,a^0_{1}\,a^0_{2}\,a^0_{3}\,\bm{a^1_{4}}; 
\\
  T'_{\ell} &= \left\{\begin{array}{l}
\bm{a^{\ell+1}_0}a^{\ell}_1\,a^{\ell}_2\,a^{\ell}_3\,\bm{a^{\ell-1}_4}
\mbox{, if \(\ell\) is odd,}\\
\bm{a^{\ell-1}_0}a^{\ell}_1\,a^{\ell}_2\,a^{\ell}_3\,\bm{a^{\ell+1}_4}\mbox{, if \(\ell\) is even,}\\
\end{array}\right.\mbox{for \(0 < \ell < k^*\);}\\
T'_{k^*} &= \left\{\begin{array}{l}
a^{k^*}_0a^{k^*}_1a^{k^*}_2a^{k^*}_3\bm{a^{k^*-1}_4}\mbox{, if \(k^*\) is odd,}\\
\bm{a^{k^*-1}_0}a^{k^*}_1a^{k^*}_2a^{k^*}_3a^{k^*}_4\mbox{, if \(k^*\) is even.}\\
\end{array}\right.
\end{align*}
	Then, \(\mathcal{D}' = \mathcal{D} - T_0 - T_1 \cdots - T_{k^*} + T'_0 + T'_1 \cdots + T'_{k^*}\) is an 
\(\mathcal{F}'\)-basic decomposition (see Figure~\ref{fig:disentangle} for an example). \begin{figure}[h]
		\centering
		\begin{tikzpicture}[scale = 1.2]

	
	\node (p1) at (1,1) {};
	\node (q1) at (3,1) {};
	\node (q2) at (5,1) {};
	\node (q3) at (7,1) {};

	\node (0) [white vertex] at (1,1) {};
	\node (a0) [left] at (0) {\small$a^0_{0},a^0_{4},a^1_{0},a^2_{4}\,\,\,$};
	\node (1) [black vertex] at (0,0) {};
	\node (a1) [left] at (1) {\small$a^0_{1}\,$};
	\node (2) [white vertex] at (1,-1) {};
	\node (a2) [below] at (2) {\small$a^0_{2}$};
	\node (3) [black vertex] at (2,0) {};
	\node (a3) [left] at (3) {\small$a^0_{3}\,\,$};
	\node (4) [white vertex] at (0) {};
	
	\node (T0) [color=green!75!blue] at ($(1) + (1,0.05)$) {\(T_0\)};
	\node (T1) [color=red] at ($(3) + (1,0.05)$) {\(T_1\)};
	
	\draw[M edge, color=green!75!blue] (0) -- (1);
	\draw[F1 edge, color=green!75!blue] (1) -- (2);
	\draw[F1 edge, color=green!75!blue] (2) -- (3);
	\draw[F2 edge, color=green!75!blue] (3) -- (4);
	
	\node (00) [white vertex] at (0) {};
	\node (11) [black vertex] at (4,0) {};
	\node (a11) [left] at (11) {\small$a^1_{1}\,\,$};
	\node (22) [white vertex] at (3,-1) {};
	\node (a22) [below] at (22) {\small$a^1_{2}$};
	\node (33) [black vertex] at (2,0) {};
	\node (a33) [right] at (33) {\small$\,a^1_{3}$};
	\node (44) [white vertex] at (3,1) {};
	\node (a44) [left] at (44) {\small$a^1_{4}\,\,\,$};

	\node (T2) [color=blue] at ($(11) + (1,0.05)$) {\(T_2\)};

	\draw[M edge, color=red] (00) to[bend left=55] (11);
	\draw[F1 edge, color=red] (11) -- (22);
	\draw[F1 edge, color=red] (22) -- (33);
	\draw[F2 edge, color=red] (33) -- (44);

	\node (000) [white vertex] at (5,1) {};
	\node (a000) [left] at (000) {\small$a^2_{0}\,\,$};
	\node (111) [black vertex] at (4,0) {};
	\node (a111) [right] at (111) {\small$\,\,\,a^2_{1}$};
	\node (222) [white vertex] at (5,-1) {};
	\node (a222) [below] at (222) {\small$a^2_{2}$};
	\node (333) [black vertex] at (6,0) {};
	\node (a333) [left] at (333) {\small$a^2_{3}\,\,$};
	\node (444) [white vertex] at (0) {};
	
	\node (T3) [color=black] at ($(333) + (1,0.05)$) {\(T_3\)};
	
	\draw[M edge, color=blue] (000) -- (111);
	\draw[F1 edge, color=blue] (111) -- (222);
	\draw[F1 edge, color=blue] (222) -- (333);
	\draw[F2 edge, color=blue] (333) to[bend right=60] (444);

	\node (0000) [white vertex] at (7,1) {};
	\node (a0000) [right] at (0000) {\small$\,\,a^3_{0},a^3_{4}$};
	\node (1111) [black vertex] at (8,0) {};
	\node (a1111) [right] at (1111) {\small$\,a^3_{1}$};
	\node (2222) [white vertex] at (7,-1) {};
	\node (a2222) [below] at (2222) {\small$a^3_{2}$};
	\node (3333) [black vertex] at (6,0) {};
	\node (a3333) [right] at (3333) {\small$\,a^3_{3}$};
	\node (4444) [white vertex] at (0000) {};

	\draw[M edge] (0000) -- (1111);
	\draw[F1 edge] (1111) -- (2222);
	\draw[F1 edge] (2222) -- (3333);
	\draw[F2 edge] (3333) -- (4444);
\end{tikzpicture}
		\begin{tikzpicture}[scale = 1.2]

	
	\node (p1) at (1,1) {};
	\node (q1) at (3,1) {};
	\node (q2) at (5,1) {};
	\node (q3) at (7,1) {};

	\node (0) [white vertex] at (1,1) {};
	\node (a0) [left] at (0) {\small$a^0_{0},a^0_{4},a^1_{0},a^2_{4}\,\,\,$};
	\node (1) [black vertex] at (0,0) {};
	\node (a1) [left] at (1) {\small$a^0_{1}\,$};
	\node (2) [white vertex] at (1,-1) {};
	\node (a2) [below] at (2) {\small$a^0_{2}$};
	\node (3) [black vertex] at (2,0) {};
	\node (a3) [left] at (3) {\small$a^0_{3}\,\,$};
	\node (4) [white vertex] at (0) {};
	
	\node (T0) [color=green!75!blue] at ($(1) + (1,0.05)$) {\(T_0'\)};
	\node (T1) [color=red] at ($(3) + (1,0.05)$) {\(T_1'\)};
	
	\draw[M edge, color=green!75!blue] (0) -- (1);
	\draw[F1 edge, color=green!75!blue] (1) -- (2);
	\draw[F1 edge, color=green!75!blue] (2) -- (3);
	\draw[F2 edge, color=red] (3) -- (4);
	
	\node (00) [white vertex] at (0) {};
	\node (11) [black vertex] at (4,0) {};
	\node (a11) [left] at (11) {\small$a^1_{1}\,\,$};
	\node (22) [white vertex] at (3,-1) {};
	\node (a22) [below] at (22) {\small$a^1_{2}$};
	\node (33) [black vertex] at (2,0) {};
	\node (a33) [right] at (33) {\small$\,a^1_{3}$};
	\node (44) [white vertex] at (3,1) {};
	\node (a44) [left] at (44) {\small$a^1_{4}\,\,\,$};

	\node (T2) [color=blue] at ($(11) + (1,0.05)$) {\(T_2'\)};
	
	\draw[M edge, color=blue] (00) to[bend left=55] (11);
	\draw[F1 edge, color=red] (11) -- (22);
	\draw[F1 edge, color=red] (22) -- (33);
	\draw[F2 edge, color=green!75!blue] (33) -- (44);

	\node (000) [white vertex] at (5,1) {};
	\node (a000) [left] at (000) {\small$a^2_{0}\,\,$};
	\node (111) [black vertex] at (4,0) {};
	\node (a111) [right] at (111) {\small$\,\,\,a^2_{1}$};
	\node (222) [white vertex] at (5,-1) {};
	\node (a222) [below] at (222) {\small$a^2_{2}$};
	\node (333) [black vertex] at (6,0) {};
	\node (a333) [left] at (333) {\small$a^2_{3}\,\,$};
	\node (444) [white vertex] at (0) {};
	
	\node (T3) [color=black] at ($(333) + (1,0.05)$) {\(T_3'\)};
	
	\draw[M edge, color=red] (000) -- (111);
	\draw[F1 edge, color=blue] (111) -- (222);
	\draw[F1 edge, color=blue] (222) -- (333);
	\draw[F2 edge, color=black] (333) to[bend right=60] (444);

	\node (0000) [white vertex] at (7,1) {};
	\node (a0000) [right] at (0000) {\small$\,\,a^3_{0},a^3_{4}$};
	\node (1111) [black vertex] at (8,0) {};
	\node (a1111) [right] at (1111) {\small$\,a^3_{1}$};
	\node (2222) [white vertex] at (7,-1) {};
	\node (a2222) [below] at (2222) {\small$a^3_{2}$};
	\node (3333) [black vertex] at (6,0) {};
	\node (a3333) [right] at (3333) {\small$\,a^3_{3}$};
	\node (4444) [white vertex] at (0000) {};

	\draw[M edge] (0000) -- (1111);
	\draw[F1 edge] (1111) -- (2222);
	\draw[F1 edge] (2222) -- (3333);
	\draw[F2 edge, color=blue] (3333) -- (4444);
\end{tikzpicture}
		\caption{Example of a sequence \(T_0,T_1,T_2,T_3\) and the corresponding 
paths \(T'_0\), \(T'_1\), \(T'_2\), \(T'_3\).}
		\label{fig:disentangle}
	\end{figure}
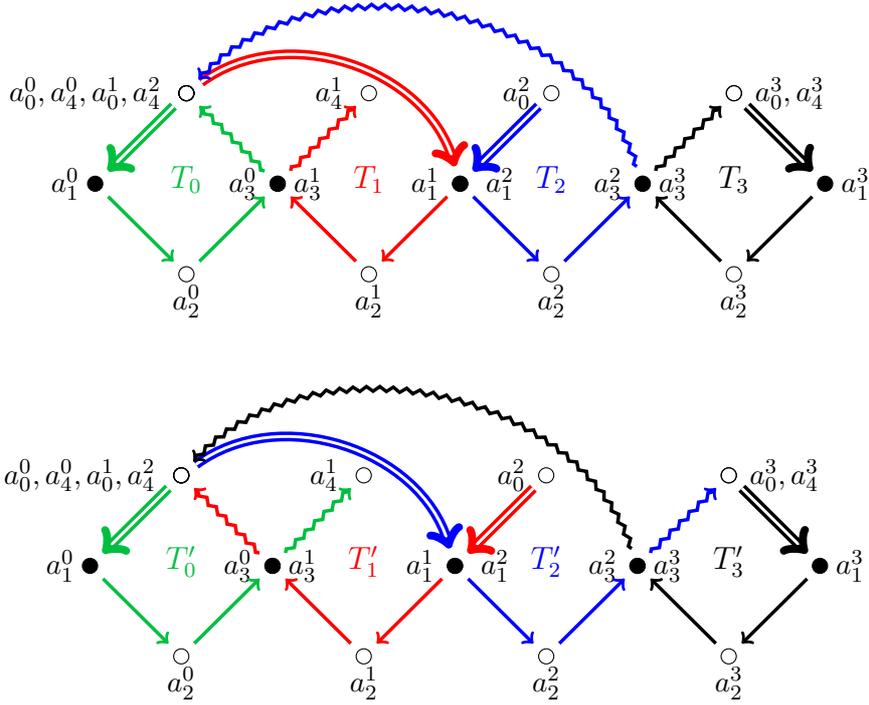	
	Furthermore, \(\rho (\mathcal{D}') < \rho (\mathcal{D})\), contradicting the minimality of 
\(\rho(\mathcal{D})\). Therefore, \(G'\) admits an \(\mathcal{F}'\)-basic path decomposition $\mathcal{D}$.

To finish the proof we extend the \(\mathcal{F}'\)-basic {path} decomposition $\mathcal{D}$ of $G'$ to an 
\hbox{\(\mathcal{F}\)-canonical} decomposition of $G$ by using the edges of $H^-(A)$. 
Note that each {\(\mathcal{F}\)-basic path}  in~\(\mathcal{D}\)
is a directed path ending {with} an edge of \(F^+_2(A)\)
and at a vertex of \(B\). But since, by item~(iii) of Definition~\ref{def:special-decomposition}, 
$d^-_{H}(v)=d^+_{H}(v)$ for every $v\in B$, we can easily extend~$\mathcal{D}$ to an 
\(\mathcal{F}\)-canonical decomposition of $G$ by adding one edge of $H^-(A)$ 
to each one of {its} \(\mathcal{F}'\)-basic paths,
{concluding} the proof.
\end{proof}

Combining Lemmas~\ref{lemma:special-decomposition}~and~\ref{lemma:decomposition-Tout} we obtain the following 
corollary.

\begin{corollary}\label{cor:special-decomposition}
Let $G=(A \cup B,E)$ be a $6$-edge-connected bipartite graph such that the vertices in $A$ have degree divisible by $5$. 
Then, $G$ is the underlying graph of a directed graph $\vec G$ that
admits a fractional factorization \(\mathcal{F}\) 
and an \(\mathcal{F}\)-canonical decomposition.
\end{corollary}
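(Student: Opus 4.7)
The plan is a direct concatenation of the two preceding lemmas. Since the hypothesis is that $G$ is $6$-edge-connected, $G$ is in particular $4$-edge-connected, so the hypotheses of Lemma~\ref{lemma:special-decomposition} are satisfied. Applying this lemma, I obtain an orientation $\vec G$ of $G$ that admits a fractional factorization $\mathcal{F}=(M,F,H)$ for $A$.

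Next, I invoke Lemma~\ref{lemma:decomposition-Tout}. To do this I only need to verify that $\vec G$ is a $6$-edge-connected bipartite directed graph admitting $\mathcal{F}$. The bipartiteness of $\vec G$ is inherited from $G$ since orienting edges does not change the underlying graph. The $6$-edge-connectivity of $\vec G$ is, by definition in Section~\ref{sec:def}, the $6$-edge-connectivity of its underlying graph, which is exactly $G$. Hence Lemma~\ref{lemma:decomposition-Tout} yields an $\mathcal{F}$-canonical decomposition of $\vec G$, which is what we wanted.

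There is essentially no obstacle here; the only thing worth being careful about is to make sure the fractional factorization produced by Lemma~\ref{lemma:special-decomposition} is the \emph{same} factorization used when invoking Lemma~\ref{lemma:decomposition-Tout}, so that the notion of ``$\mathcal{F}$-canonical'' refers to this specific triple $(M,F,H)$. In the write-up I would simply name $\mathcal{F}$ explicitly once it is produced by the first lemma, and then feed $\vec G$ together with this $\mathcal{F}$ into the second lemma.
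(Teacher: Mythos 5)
Your proof is correct and is exactly the route the paper intends: it presents this corollary as an immediate consequence of Lemmas~\ref{lemma:special-decomposition} and~\ref{lemma:decomposition-Tout}, and your write-up simply spells out the verifications (that $6$-edge-connectedness implies the $4$-edge-connectedness needed for the first lemma, and that the resulting $\vec G$ is still a $6$-edge-connected bipartite directed graph as needed for the second).
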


\section{Proof of the Main Theorem}\label{sec:main}

In this section we manage to 
``disentangle'' the trails of a canonical decomposition to obtain a decomposition into paths of length~$5$.
Denote by \(T_5\) the only bipartite trail of length \(5\) that is not a path.
We recall that a \(\{P_5,T_5\}\)-decomposition \(\mathcal{D}\) of a directed graph \(\vec G\) is a 
decomposition of \(\vec G\) such that every element of \(\mathcal{D}\) is either a copy of \(P_5\)
or a copy of \(T_5\).

Let \(\vec G\)
be a directed graph and { \(ab\)
 an edge of \(\vec G\).}
Let \(\mathcal{D}\)
be a decomposition of \(\vec G\),
and let \(T\)
be the element of \(\mathcal{D}\)
that contains \(ab\).
We say that \(ab\)
is \emph{inward} in \(\mathcal{D}\)
if \(d_T(a) = 1\).
Suppose that $\vec G$ admits a fractional factorization
$\mathcal{F} = (M,F,H)$.  Let \(\mathcal{D}\)
be a $\{P_{5},T_{5}\}$-decomposition of $\vec G$.  We say that
\(\mathcal{D}\)
is \emph{\(M\)-complete}
if every edge of \(M\)
is inward in \(\mathcal{D}\).
Note that if \(T\)
is an \(\mathcal{F}\)-canonical
path or an \(\mathcal{F}\)-canonical
trail, then the edge of \(M\)
in \(T\)
is inward in \(\mathcal{D}\).
Therefore, if \(\mathcal{D}\)
is an \(\mathcal{F}\)-canonical
decomposition, then \(\mathcal{D}\)
is \(M\)-complete.  The next theorem is our main result.

\begin{theorem}\label{theorem:main-theorem}
  There exists a natural number $k_T$ such that, if \(G\) is a
  \(k_T\)-edge-connected graph and \(|E(G)|\) is divisible by \(5\),
  then \(G\) admits a \(P_5\)-decomposition.
\end{theorem}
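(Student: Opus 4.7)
The plan is to reduce Theorem~\ref{theorem:main-theorem} to its bipartite counterpart and apply Theorem~\ref{theorem:conj-equivalence}. Concretely, I would prove that every $48$-edge-connected bipartite graph $G=(A_1\cup A_2,E)$ with $5\mid |E|$ admits a $P_5$-decomposition; the general statement then follows from the explicit bound $k_T\le 4k'_T+16(t-1)^{6t-5}$ provided by Theorem~\ref{theorem:conj-equivalence}, yielding some finite $k_{P_5}$.

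For the bipartite case, I would apply Lemma~\ref{lem:tight-good-initial-decomposition} with $k=5$ and $r=6$: since $6k-6+4r=48$, this decomposes $E(G)$ into two spanning $6$-edge-connected subgraphs $G_1,G_2$ such that, for $i\in\{1,2\}$, every vertex of $A_i$ has degree in $G_i$ divisible by $5$. Corollary~\ref{cor:special-decomposition} then applies to each $G_i$ (with $A:=A_i$ and $B:=A_{3-i}$) and produces an orientation $\vec{G}_i$, a fractional factorization $\mathcal{F}_i=(M_i,F_i,H_i)$ for $A_i$, and an $\mathcal{F}_i$-canonical decomposition $\mathcal{D}_i$ of $\vec{G}_i$. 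Writing $M=M_1\cup M_2$, $F=F_1\cup F_2$, $H=H_1\cup H_2$ and $\mathcal{D}=\mathcal{D}_1\cup \mathcal{D}_2$, one obtains an $M$-complete $\{P_5,T_5\}$-decomposition of a directed graph $\vec{G}$ whose underlying graph is $G$.

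The remaining and genuinely hard task is to \emph{disentangle} $\mathcal{D}$ into a pure $P_5$-decomposition. Each $\mathcal{F}$-canonical trail $T=abcdef\in\mathcal{D}$ has $abcde$ a basic path, and since $G$ is bipartite and $de\in H$ already uses the edge $de$, the repeated vertex must be $f=b$; thus $T$ is a pendant $M$-edge $ab$ attached to the $4$-cycle $bcdeb$. To convert $T$ into a $P_5$, I would look at the other element $T'\in\mathcal{D}$ that uses the edge of $F\cup H$ incident to $b$ that closes the $4$-cycle, and perform a carefully chosen local edge swap between $T$ and $T'$ that preserves the edge multiset, the $M$-completeness of $\mathcal{D}$, and the property that each resulting element has length $5$, while replacing $T$ by a $P_5$. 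A single swap, however, may turn $T'$ into a new $T_5$, so the argument has to follow a maximal \emph{chain} of forced swaps $T_0,T_1,T_2,\ldots$ — exactly in the spirit of the disentanglement performed inside the proof of Lemma~\ref{lemma:decomposition-Tout} — until the chain returns to a previously visited configuration; at that point a global rerouting of the chain (using that the $6$-edge-connectivity of each $G_i$ guarantees enough edges at each vertex of $A_i$ to choose partner elements) strictly decreases the potential function $\rho(\mathcal{D})=\#\{T\in\mathcal{D}:T\cong T_5\}$.

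The main obstacle is precisely this rerouting lemma: one must show that no matter which $T_5$ one starts from, there is always a valid partner $T'$, and that the swaps can be orchestrated so that after finitely many steps all $T_5$'s are eliminated without ever destroying the $M$-completeness or the length-$5$ structure. The case analysis is delicate because the labels $M,F,H$ of the swapped edges have to match up, and because a partner element may itself be a $T_5$, requiring additional swaps. Once this disentanglement lemma is proved, iterating it on $\mathcal{D}$ terminates (since $\rho$ is a nonnegative integer that strictly decreases), yielding a $P_5$-decomposition of $\vec{G}$ and hence of $G$. Combined with Theorem~\ref{theorem:conj-equivalence}, this establishes Theorem~\ref{theorem:main-theorem}.
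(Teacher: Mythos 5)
Your outline correctly identifies the paper's overall architecture: reduce to the bipartite case via Theorem~\ref{theorem:conj-equivalence}, apply Lemma~\ref{lem:tight-good-initial-decomposition} with $k=5$, $r=6$ to split $G$ into two $6$-edge-connected spanning subgraphs with the right degree divisibility, feed each into Corollary~\ref{cor:special-decomposition}, and merge the resulting canonical decompositions into an $M$-complete $\{P_5,T_5\}$-decomposition $\mathcal{D}$, then eliminate $T_5$'s by exchanges while preserving $M$-completeness. Up to that point the plan agrees with the paper.

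However, there is a genuine gap: the disentanglement step, which is the entire content of the paper's proof of Theorem~\ref{theorem:main-theorem-bip}, is only gestured at. You explicitly flag it as ``the main obstacle'' and leave it unresolved, but this is precisely the part that makes the theorem nontrivial — everything before it is routine assembly of the preceding lemmas. Moreover, your description of how the chain of swaps is constructed is not quite the right mechanism. You propose to follow ``the edge of $F\cup H$ incident to $b$ that closes the $4$-cycle'' and to locate the ``partner'' $T'$ using it. The paper instead exploits $M$-completeness directly: given the $T_5$-element $T=v_0v_1v_2v_3v_4v_5$ with $v_5=v_1$, it uses the fact that every vertex receives at least one $M$-edge, picks the $M$-edge $uv_2$ into $v_2$, and lets $B_1$ be the element of $\mathcal{D}$ containing it. If $v_1\notin V(B_1)$ a single local swap (Case~1) works; if $v_1\in V(B_1)$ the paper builds a \emph{coupled sequence} $B_1,\ldots,B_{k-1}$ satisfying conditions on $M$-edges and on the vertex $v_1$, proves each $B_i$ is a $P_5$ and all are distinct, extends the sequence one step by again following an $M$-edge (here using that item~(iii) fails for $B_k$ by maximality), and then performs a simultaneous rerouting that turns all of $T,B_1,\ldots,B_{k-1}$ into paths and keeps $B_k$ of the same type. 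Your sketch also suggests the chain ``returns to a previously visited configuration,'' but the paper's coupled-sequence argument in Case~2 proves the $B_i$ are pairwise distinct and terminates by maximality, not by a return. In short, the skeleton is right and matches the paper, but the disentanglement lemma — whose statement and proof you defer — must actually be carried out, and the specific chain-following rule (via $M$-edges and $M$-completeness, not via $F\cup H$ edges) is essential to making it work.
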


Our main theorem follows directly from Theorem~\ref{theorem:conj-equivalence} and the next 
result.

\begin{theorem}\label{theorem:main-theorem-bip}
If \(G\) is a \(48\)-edge-connected bipartite graph and \(|E(G)|\) is
divisible by \(5\), then \(G\) admits a \(P_5\)-decomposition.
\end{theorem}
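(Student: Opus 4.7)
The strategy combines Lemma \ref{lem:tight-good-initial-decomposition} with Corollary \ref{cor:special-decomposition} to produce an $M$-complete $\{P_5, T_5\}$-decomposition of (an orientation of) $G$, and then disentangles the trails of this decomposition to obtain a pure $P_5$-decomposition.

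First, I would apply Lemma \ref{lem:tight-good-initial-decomposition} with $k = 5$ (odd) and $r = 6$: since $6k - 6 + 4r = 48$, the $48$-edge-connectivity of $G = (A \cup B, E)$ yields a decomposition $E = E(G_1) \cup E(G_2)$ into two spanning $6$-edge-connected bipartite subgraphs such that every vertex of $A$ has degree divisible by $5$ in $G_1$ and every vertex of $B$ has degree divisible by $5$ in $G_2$.

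Second, applying Corollary \ref{cor:special-decomposition} to $G_1$ (with $A$ as the ``divisibility side'') and to $G_2$ (with $B$ as the ``divisibility side'') produces orientations $\vec{G_1}$ and $\vec{G_2}$ together with fractional factorizations $\mathcal{F}_i = (M_i, F_i, H_i)$ and $\mathcal{F}_i$-canonical decompositions $\mathcal{D}_i$ of $\vec{G_i}$, for $i = 1, 2$. Let $\vec G$ be the orientation of $G$ obtained by taking the union of $\vec{G_1}$ and $\vec{G_2}$, and set $M := M_1 \cup M_2$. The collection $\mathcal{D}_0 := \mathcal{D}_1 \cup \mathcal{D}_2$ is then a $\{P_5, T_5\}$-decomposition of $\vec G$, and within each element $T \in \mathcal{D}_0$ the unique $M$-edge has its tail of degree one in $T$, so $\mathcal{D}_0$ is $M$-complete.

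Third, I would transform $\mathcal{D}_0$ into an $M$-complete $P_5$-decomposition by an iterative swap procedure. Each canonical trail in $\mathcal{D}_0$ has the form $T = abcdeb$ with $b$ the unique vertex of degree three; the $M$-edge $ab$ is a pendant that must not be altered. Given such a $T$, the idea is to locate another element $T' \in \mathcal{D}_0$ sharing a suitable vertex with $T$ (typically $e$ or $b$) and perform a local edge exchange: replace the ``closing'' edge $eb \in H$ by some edge of $T'$ incident to $e$, and re-route $T'$ correspondingly. A carefully chosen exchange converts $T$ into a $P_5$ while keeping $T'$ a $\{P_5, T_5\}$-element and preserving $M$-completeness.

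The main obstacle is controlling the net effect of these swaps. A single exchange can destroy one trail while creating another in $T'$, so trail-count by itself is not a valid monovariant. I expect the correct argument to mimic the disentangling step in the proof of Lemma \ref{lemma:decomposition-Tout}: form a maximal chain of decomposition elements linked by shared structural vertices, use finiteness of $G$ to detect a repeated vertex along the chain, and then perform a simultaneous reshuffling along the resulting cycle that strictly decreases a lexicographic potential (for example, the pair consisting of the number of trails and a secondary ``tangling'' measure). Bipartiteness and the rigid shape imposed by $\mathcal{F}$-canonicity will be essential in ensuring that the reshuffling remains valid. Since the potential is bounded below, the process terminates in an $M$-complete $P_5$-decomposition, which, forgetting the orientation, yields the desired $P_5$-decomposition of $G$.
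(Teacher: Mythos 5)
Your first two steps reproduce the paper's proof verbatim: Lemma~\ref{lem:tight-good-initial-decomposition} with $k=5$, $r=6$ gives the two $6$-edge-connected spanning subgraphs $G_1$, $G_2$ with the needed divisibility, and Corollary~\ref{cor:special-decomposition} then produces the fractional factorizations, the orientations, and the $\mathcal{F}_i$-canonical decompositions, whose union is an $M$-complete $\{P_5,T_5\}$-decomposition of $\vec G$. That part is solid and exactly matches the paper.

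The third step, however, is where essentially the whole difficulty of Section~\ref{sec:main} lives, and your description leaves a genuine gap: you correctly anticipate that a single local swap can fail and that a chain-based simultaneous reshuffle is needed, but you neither identify the right pivot nor carry out the chain construction, and some of the details you do guess are not what actually works. Writing a $T_5$ as $T=abcdeb$ with the $M$-edge $ab$ and the cycle $bcdeb$, the paper does \emph{not} look for a partner through $e$ or $b$, nor remove the closing $H$-edge $eb$: it uses $M$-completeness to find the element $B_1$ whose $M$-edge points \emph{into $c$} (the first cycle vertex along the $F$-edges), and the swap trades the $F$-edge $bc$ of $T$ against the $M$-edge of $B_1$, keeping $T$'s other four edges intact. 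The one case that can fail is when $b\in V(B_1)$; there the paper introduces the notion of a ``coupled sequence'' $B_1,\dots,B_{k-1}$ of decomposition elements, each passing through $b$ and chained by the relation $b^i_1=b^{i-1}_3$, proves (using bipartiteness and $M$-completeness) that the sequence has no repeated elements and that every $B_i$ is a $P_5$, takes a maximal such sequence, shows the next element $B_k$ cannot pass through $b$, and then performs a \emph{simultaneous} edge swap along the whole chain. Crucially, the monovariant is just the number of $T_5$'s in an $M$-complete $\{P_5,T_5\}$-decomposition --- no secondary ``tangling'' measure or lexicographic order is needed, because the chain swap is designed so that every $B_i'$ is a path, $B_k'$ has the same type as $B_k$, and $T'$ becomes a path, strictly decreasing the $T_5$ count. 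Your sketch gestures in the right direction, but until you pin down the pivot at $c$, define the coupled sequence, prove it has no repeats, and verify that the chain swap preserves $M$-completeness and the path/trail types, the proof is not complete.
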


\begin{proof}

Let $G = (A \cup B,E)$ be a \(48\)-edge-connected bipartite graph such that \(|E|\) is
divisible by~\(5\). 
By Lemma~\ref{lem:tight-good-initial-decomposition} (taking $r=6$ and $k=5$), 
\(G\) can be decomposed into graphs \(G_1\) and \(G_2\) 
such that \(G_1\) is \(6\)-edge-connected and \(d_{G_1} (v)\) is divisible
by \(5\) for every \(v \in A\),
and \(G_2\) is \(6\)-edge-connected and \(d_{G_2} (v)\) is divisible
by \(5\) for every \(v \in B\). 
Thus, by Corollary~\ref{cor:special-decomposition}, $G_i$ is 
the underlying graph of a directed graph $\vec {G_i}$ that admits a fractional factorization 
$\mathcal{F}_i =(M_i,F_i,H_i)$ and an
\(\mathcal{F}_i\)-canonical decomposition $\mathcal{D}_i$, for \(i=1,2\).

By definition, \(\mathcal{D}_1\) is an \(M_1\)-complete decomposition of \(G_1\) 
and \(\mathcal{D}_2\) is an \(M_2\)-complete decomposition of \(G_2\). 
Let $M=M_1\cup M_2$ and \(\mathcal{F} = (M,F_1\cup F_2,H_1\cup H_2)\).
Then, \(\mathcal{D} = \mathcal{D}_1 \cup \mathcal{D}_2\) 
is an \(M\)-complete \(\mathcal{F}\)-canonical decomposition 
of $\vec G$, where $\vec G=\vec{G_1}\cup \vec{G_2}$. Note that, for every vertex $v$ of $\vec G$, there is at least one edge of $M$ pointing to $v$.
Moreover, since an \(\mathcal{F}\)-canonical path is a copy of \(P_5\), 
and an \(\mathcal{F}\)-canonical trail is a copy of \(T_5\), 
we have that any \(\mathcal{F}\)-canonical decomposition of \(\vec G\) is
also a \(\{P_5,T_5\}\)-decomposition of \(\vec G\).
Therefore, \(\mathcal{D}\) is an \(M\)-complete \(\{P_5,T_5\}\)-decomposition of \(\vec G\).

Let \(\mathcal{D}\) be an $M$-complete \(\{P_5,T_5\}\)-decomposition of \(\vec G\) with minimum number of 
copies 
of $T_5$. If there is no copy of $T_5$ in $\mathcal{D}$, then $\mathcal{D}$ is a $P_5$-decomposition 
of 
$\vec G$ and the proof is complete. Therefore, we may suppose that there is at least one copy of $T_5$ 
in~$\mathcal{D}$. In what follows, we aim for a contradiction.

Let $T=v_0v_1v_2v_3v_4v_5$ with $v_5=v_1$ be a copy of $T_5$ in $\mathcal{D}$. 
Recall that there exists an edge \(uv_2\) of $M$ pointing to $v_2$. Let 
$B_1$ be the element of $\mathcal{D}$ that contains \(uv_2\).
Since \(\mathcal{D}\) is \(M\)-complete, \(d_{B_1}(u) = 1\).
Therefore, we may suppose that \(B_1=b_0b_1b_2b_3b_4b_5\), where $b_1=v_2$,
and, possibly, $b_1=b_5$.

We divide the proof in two cases, depending on whether \(v_1\)
belongs or not to \(V(B_1)\).

\noindent{\textbf{Case 1}}: \(v_1\notin V(B_1)\).

\noindent
	Let \(T' = v_0v_1v_4v_3v_2b_0\), \(B_1' = v_1b_1b_2b_3b_4b_5\),
	and \(\mathcal{D}' = \mathcal{D} - T - B_1 + T' + B_1'\).
	We claim that \(T'\) is a path, 
	\(B_1'\) is of the same type of element as $B_1$ 
	(i.e., the underlying graphs of \(B_1'\) and \(B_1\) are isomorphic), 
	and the edges of \(M\) in 
	\(A(T')\cup A(B_1')\) are inward in \(\mathcal{D}'\).
	Thus \(\mathcal{D}'\) is an $M$-complete
	decomposition with fewer copies of \(T_5\) than~$\mathcal{D}$, a contradiction.
	
	First, let us prove that \(T'\) is a path. Note that \(b_0 \neq v_0\) and \(b_0 \neq v_4\),
	otherwise \(b_0b_1v_1\) would induce a triangle in \(G\), a contradiction.
	We also know that \(b_0 \neq v_1\) and \(b_0 \neq v_3\), since \(G\) has no parallel edges. Furthermore,
	\(b_0 \neq v_2\), since \(G\) has no loops.
	Since \(v_1 \notin V(B_1)\), if \(B_1\) is a path, then \(B_1'\) is a path;
	and \(B_1'\) is a copy of \(T_5\), otherwise.
	
	It is left to prove that every directed edge in \(M\) is inward in $\mathcal{D}'$.
	We just need to prove this for the directed edges in \(M\cap \big(A(T')\cup A(B_1')\big)\).
	Note that the only edges in \(M\cap \big(A(T')\cup A(B_1')\big)\) are \(b_0v_2\) and, 
	possibly, \(v_0v_1\) and \(b_5b_4\). 
	Since \(d_{T'}(b_0) = 1\) and \(d_{T'}(v_0) = 1\), 
	the edges \(b_0b_1\) and \(v_0v_1\) are inward in 
	\(\mathcal{D}'\).
	If \(b_5b_4\) is an edge of \(M\), then \(B_1\) is a path ending at \(b_5\).
	Therefore, \(B_1'\) is a path ending at \(b_5\),
	and \(b_5b_4\) is inward in \(\mathcal{D}'\). \ \\

\noindent{\textbf{Case 2}}: \(v_1\in V(B_1)\).

\noindent
	Consider a sequence $\mathcal{B}=B_1B_2\ldots B_{k-1}$ of elements of $\mathcal{D}$, 
	where $b^1_1 = v_2$, $B_i=b^i_0b^i_1b^i_2b^i_3b^i_4b^i_5$ for $i\leq k-1$.
	We say that \(\mathcal{B}\) is a \emph{coupled sequence centered at \(v_1\)} 
	if the following properties hold
	(See Figure~\ref{fig:main-case2}).
	\begin{enumerate}[label=(\roman*)]
		\item $b^{i}_0b^{i}_1\in M$, for $1\leq i\leq k-1$; 
		\item $b^{i}_1=b^{i-1}_3$, for $2\leq i\leq k-1$; 
		\item $b^{i}_4=v_1$, for $1\leq i\leq k-1$. 
	\end{enumerate}	
	Note that, by hypothesis, \(v_1\) is a vertex of \(B_1\).
	Since \(G\) is a bipartite graph, 
	\(v_1 = b^1_4\).
	Therefore, \(B_1\) is a coupled sequence centered at \(v_1\) 
	with only one element (that is, \(k=2\)).
	Thus, we may suppose that there is a maximal coupled sequence $\mathcal{B}$
	centered at \(v_1\).

\begin{claim}
	\(B_i\) is a path of length \(5\), for \(1\leq i\leq k-1\).
\end{claim}

\begin{proof}
If for some \(i\in \{1,\ldots, k-1\}\), the element \(B_i\) is a copy of \(T_5\),
then \(d_{B_i}(b^i_0) = 1\) and \(b^i_5 = b^i_1\), 
because (by item (i)) \(b^i_0b^i_1\) is an edge of \(M\) and, 
since \(\mathcal{D}\) is \(M\)-complete, \(b^i_0b^i_1\) must be inward in \(\mathcal{D}\).
Since \(v_1 \in V(B_i)\), we know that either \(v_1 = b^i_2\) or \(v_1=b^i_4\),
because \(G\) is bipartite.
Note that the edge \(v_2v_1\) is an edge of \(T\).
If \(i = 1\), then \(b^1_1v_1\) and \(v_2v_1\) are parallel edges.
If \(i > 1\), then (by item (ii)) \(b^{i-1}_3v_1 = b^i_1v_1\) must be an edge of \(B_{i-1}\)
and of \(B_i\), and \(\mathcal{D}\) covers this edge twice.
Therefore, for every $1\leq i\leq k-1$, the element $B_i$ is a copy of $P_5$.
\end{proof}
\begin{figure}[h]
	\centering
	\scalebox{.85}{\begin{tikzpicture}[scale = 1.5]

\node (c) [] at (0,0) {};

\foreach \i in {0,1,2,3,5,6,7}
    {
	\node (o\i) [] at (\i*45-45:3) {};
    };
    
\foreach \i in {1,2}
    {
	\node (oo\i) [] at (\i*45-45:4) {};
    };

\foreach \i in {0,1,2}
    {
	\node (m\i) [] at (\i*45-22.5:2) {};
	\node (lm\i) [] at (\i*45-22.5:2.25) {};
    };
    
\foreach \i in {0,1,2}
    {
	\node (l\i) [] at (\i*45-22.5:1.5) {};
    };

	\node (x0) [black vertex] at (o7) {};
	\node (v0) [left] at ($(x0) - (0.05,0)$) {\(v_0\)};
	\node (x1) [white vertex] at (c) {};
	\node (v1) [below] at ($(x1) + (-1.1,0.5)$) {\(b_4^2 = b_4^1 = v_1 = v_5\)};
	\node (x2) [black vertex] at (o1) {};
	\node (v2) [below] at ($(x2) + (0.23,-0.08)$) {\(v_2 = b_1^1\)};
	\node (x3) [white vertex] at (m0) {};
	\node (v3) [below] at (lm0) {\(v_3\)};
	\node (x4) [black vertex] at (o0) {};
	\node (v4) [below] at ($(x4) + (0.23,-0.08)$) {\(v_4\)};
	
	\draw[M edge][color=green!75!blue] (x0) -- (x1);
	\draw[E edge][color=green!75!blue] (x1) -- (x2);
	\draw[E edge][color=green!75!blue] (x2) -- (x3);
	\draw[E edge][color=green!75!blue] (x3) -- (x4);
	\draw[E edge][color=green!75!blue] (x4) -- (x1);

	\node (T) [color=green!75!blue] at (l0) {\(T\)};
	\node (B1) [color=red] at (l1) {\(B_1\)};
	\node (B2) [color=blue] at (l2) {\(B_2\)};


	\node (y0) [white vertex] at (oo1) {};
	\node (b01) at ($(y0) + (0.2,-0.2)$) {\(b_0^1\)};
	\node (y1) [black vertex] at (x2) {};
	\node (y2) [white vertex] at (m1) {};
	\node (b21) at (lm1) {\(b_2^1\)};
	\node (y3) [black vertex] at (o2) {};
 	\node (b31) [above] at ($(y3) + (0.6,-0.4)$) {\(b_3^1 = b_1^2\)};
	\node (y4) [white vertex] at (c) {};
	\node (y5) [black vertex] at (o6) {};
	\node (b51) [left] at ($(y5) - (0.05,0)$) {\(b_5^1\)};

	\draw[M edge][color=red] (y0) -- (y1);
	\draw[E edge][color=red] (y1) -- (y2);
	\draw[E edge][color=red] (y2) -- (y3);
	\draw[E edge][color=red] (y3) -- (y4);
	\draw[E edge][color=red] (y4) -- (y5);

	\node (z0) [white vertex] at (oo2) {};
	\node (b02) at ($(z0) + (0.2,-0.2)$) {\(b_0^2\)};
	\node (z1) [black vertex] at (o2) {};
	\node (z2) [white vertex] at (m2) {};
	\node (b32) [right] at ($(lm2)-(0.2,0)$) {\(b_2^2\)};
	\node (z3) [black vertex] at (o3) {};
	\node (b32) [right] at ($(z3) + (0.05,0.2)$) {\(b_3^2\)};
	\node (z4) [white vertex] at (c) {};
	\node (z5) [black vertex] at (o5) {};
	\node (b52) [left] at ($(z5) - (0.05,0)$) {\(b_5^2\)};
	
	\draw[M edge][color=blue] (z0) -- (z1);
	\draw[E edge][color=blue] (z1) -- (z2);
	\draw[E edge][color=blue] (z2) -- (z3);
	\draw[E edge][color=blue] (z3) -- (z4);
	\draw[E edge][color=blue] (z4) -- (z5);

\end{tikzpicture}}
	\caption{Example of a trail $T=v_0v_1v_2v_3v_4v_5$ with
          $v_5=v_1$, and a coupled sequence \(B_1,B_2\) centered at $v_1$.}
	\label{fig:main-case2}
\end{figure}
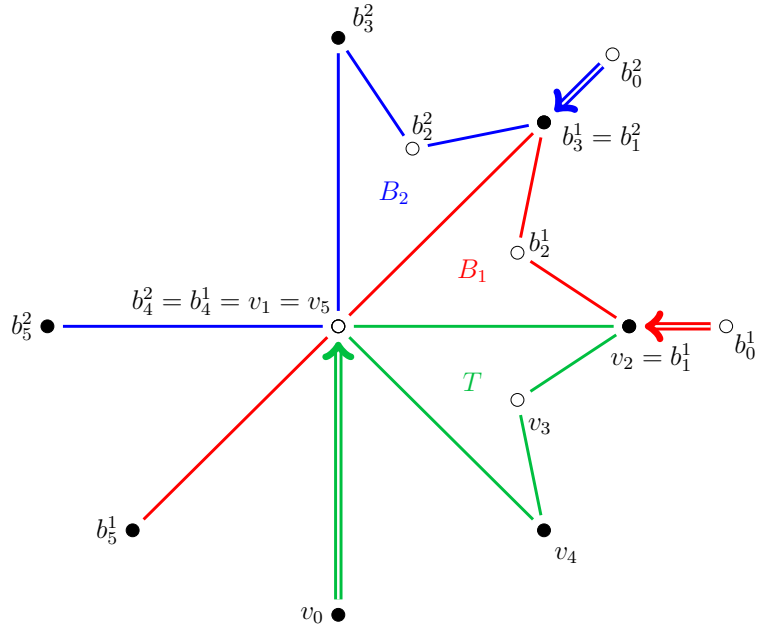	

\begin{claim}
\(B_i \neq B_j\), for \(1\leq i<j\leq k-1\).
\end{claim}
\begin{proof}
  Suppose, by contradiction, that $\mathcal{B}$ has repeated
  elements.  Let $B_i$ be the first element of $\mathcal{B}$ such that
  $B_i=B_j$ for some $j$ with $i<j$.  Since $b^{i}_0b^{i}_1\in M$ and
  $b^{j}_0b^{j}_1\in M$ (item (i)), and the elements of $B$ belong to
  an $M$-complete decomposition, either $b^j_0=b^i_0$ or
  $b^j_0=b^i_5$. If $b^j_0=b^i_5$, then we know that $b^j_4=b^i_1=v_1$
  (by item (iii)), from where we conclude that \(B_i\) contains the
  triangle \(b_4^j\,b_3^j\,b_2^j\,b_4^j\), a contradiction.  Therefore,
  assume that $b^j_0=b^i_0$. Note that \(b^{j-1}_3 =b^j_1 = b^i_1\)
  (by item (ii)). Also, \(i > 1\), otherwise \(b^{j-1}_3 = v_2\) and
  \(b^{j-1}_3b^{j-1}_4 = v_2v_1 \in E(B_{j-1})\), but \(v_1v_2 \in
  E(T)\) and $T$ and $B_{j-1}$ are different, by the choice of \(i\).
  Therefore, by item (iii), \(b^{j-1}_4 = b^{i-1}_4 = v_1\), implying
  that \(b^{i-1}_3b^{i-1}_4 = b^{j-1}_3b^{j-1}_4\) and, then,
  \(B_{i-1} = B_{j-1}\), a contradiction to the minimality of
    \(i\).  Therefore, \(B_i \neq B_j\) for every \(1\leq i<j\leq
  k-1\).
\end{proof}


Recall that there is at least one edge \(e\) in \(M\) pointing to \(b^{k-1}_3\).
Let $B_k$ be the element of $\mathcal{D}$ that contains \(e\).
We may suppose that $B_k=b^k_0\,b^k_1\,b^k_2\,b^k_3\,b^k_4\,b^k_5$, where \(e = b^k_0b^k_1\).
Note that \(\mathcal{B}' = B_1B_2\cdots B_{k-1}B_k\) satisfies items (i) and (ii).
Also, item (iii) holds for \(1\leq i\leq k-1\).
Since \(\mathcal{B}\) is maximal, \(\mathcal{B}'\) is not a coupled sequence.
Thus, item (iii) does not hold for \(i = k\).
Therefore, \(b^k_4 \neq v_1\).

Now consider the following elements:
\begin{itemize}
	\item	\(T' = T - v_2v_1 + b_0^{{ 1}}b_1^{{ 1}}\).
	\item	\(B_1' = B_1 - b_0^{{ 1}}b_1^{{ 1}} +v_2v_1 - b_3^{{ 1}}v_1 + b_0^2b_1^2\).
	\item	\(B_i' = B_i - b_0^ib_1^i + b_3^{i-1}v_1 - b_3^iv_1 + b_0^{i+1}b_1^{i+1}\), for \(2\leq i \leq k-1\).
	\item	\(B_k' = B_k - b_0^kb_1^k + b_3^{k-1}v_1\).
\end{itemize}

We claim that \(T',B_1',\ldots,B_{k-1}'\) are paths and 
$B_k'$ is of the same type of element as \(B_k\). 
The following arguments are very similar to the ones above, 
we present them for completeness.

To check that \(T'\) is a path, we prove that \(b^1_0\notin V(T)-v_0\). 
Note that \(b^1_0 \neq v_0\) and \(b^1_0 \neq v_4\),
otherwise \(b^1_0b^1_1v_1\) would induce a triangle in \(G\). 
Also \(b^1_0 \neq v_1\) and \(b^1_0 \neq v_3\), 
because \(G\) has no parallel edges, and since $G$ has no loops, \(b^1_0 \neq v_2\). 
Therefore,  \(T'\) is a path.
	
Let us check that \(B_i'\) is a path for \(1\leq i\leq k-1\).
Since \(V(B_i') = V(B_i) - b^i_0 + b^{i+1}_0\), 
we just have to prove that \(b^{i+1}_0 \notin\{b^i_1,b^i_2,b^i_3,b^i_4,b^i_5\}\).
If \(b^{i+1}_0 = b^i_1\), then \(b^i_1\,b^i_2\,b^i_3\,b^{i+1}_0\) is a triangle in \(G\).
If \(b^{i+1}_0 = b^i_2\), then \(b^i_3b^i_2\) and \(b^{i+1}_1b^{i+1}_0\) are parallel edges.
Since \(b^i_3 = b^{i+1}_1\) and \(b^{i+1}_1\neq b^{i+1}_0\), we have \(b^{i+1}_0\neq b^i_3\).
If \(b^{i+1}_0 = b^i_4\), then \(b^i_3b^i_4\) and \(b^{i+1}_1b^i_0\) are parallel.
If \(b^{i+1}_0 = b^i_5\), then \(b^{i+1}_0b^i_3\,b^i_4\,b^i_5\) is a triangle in \(G\).
Therefore, $B_2',\ldots,B_{k-1}'$ are paths.

Now, let us prove that \(v_1 \notin\{b_1^k,b_2^k,b_3^k,b_4^k,b_5^k\}\).
Since $b^k_1=b^{k-1}_3$ and $G$ is bipartite, we conclude that $v_1\notin\{b^k_1,b^k_3,b^k_5\}$.
Furthermore, since $b^k_1=b^{k-1}_3$ and $b^{k-1}_3v_1\in E(G)$, we conclude that $b^k_2\neq v_1$. 
By the maximality of the  sequence $\mathcal{B}$, we conclude that $b^k_4\neq v_1$. 
Thus, \(B_k'\) is a trail.
If \(b^k_5 \neq b^k_1\), then \(B_k\) and \(B_k'\) are both paths of length five.
If \(b^k_5 = b^k_1\), then \(B_k\) and \(B_k'\) are both copies of \(T_5\).
Therefore, \(B_k'\) is of the same type of element as~\(B_k\).

Let \(\mathcal{D}' = \mathcal{D} - T - B_1 - \cdots - B_k + T' + B_1' + \cdots + B_k'\).	
Since the edges of \(M\) are \(b_0^ib_1^i\) and, possibly \(b_5^ib_4^i\),
every edge of \(M\) is inward in \(\mathcal{D}'\).
Therefore, \(\mathcal{D}'\)
is an \(M\)-complete decomposition with fewer copies of \(T_5\) than $\mathcal{D}$, a 
contradiction.
\end{proof}

\section{Concluding remarks}\label{sec:concluding}

The technique we have shown here (in Section~\ref{sec:main}) to
  disentangle elements of the canonical decomposition seems to be
  useful to deal with more general structures.  Besides our current
  work~\cite{BoMoOsWa15+thomassen} on generalizations of these results
  to show that Conjecture~\ref{conj:dec} holds for paths of any fixed
  length, in another direction, we were able to prove a variant of our
  results to deal with \(P_\ell\)-decompositions of regular graphs of
  prescribed girth~\cite{BoMoOsWa15+reg}.  These results were obtained
  by combining ideas from this paper and a special result, which we
  named ``Disentangling Lemma'', that generalizes the ideas used in
  Section~\ref{sec:main}.  We were not able to generalize
  Lemma~\ref{lemma:decomposition-Tout} and
  Corollary~\ref{cor:special-decomposition} to obtain decompositions
  into paths of any given length. But, considering more 
    powerful factorizations and higher connectivity, we can obtain
   a kind of generalized versions of these results.
\bibliographystyle{amsplain_yk}
\bibliography{bibliografia}
\end{document}